\theoremstyle{definition} \newtheorem{lemma}{Lemma}
\theoremstyle{definition} \newtheorem{theorem}{Theorem}
\theoremstyle{definition} 
\theoremstyle{definition} 
\theoremstyle{definition}
\title{Prediction based on the Kennedy-O'Hagan calibration model: asymptotic consistency and other properties}
\author{Rui Tuo \\ Academy of Mathematics and Systems Sciences\\ Chinese Academy of Sciences \and C. F. Jeff Wu\\ School of Industrial and Systems Engineering\\ Georgia Institute of Technology}
\begin{document}

\maketitle

\abstract{\cite{kennedy2001bayesian} propose a model for calibrating some unknown parameters in a computer model and estimating the discrepancy between the computer output and physical response. This model is known to have certain identifiability issues. \cite{tuo2014calibration} show that there are examples for which the Kennedy-O'Hagan method renders unreasonable results in calibration. In spite of its unstable performance in calibration, the Kennedy-O'Hagan approach has a more robust behavior in predicting the physical response. In this work, we present some theoretical analysis to show the consistency of predictor based on their calibration model in the context of radial basis functions.}

\vspace{9pt}
\noindent {\it Key words and phrases:}
computer experiments, kriging, Bayesian inference

\section{Introduction}

With the development of mathematical models and computational technique, simulation programs or softwares are shown to be increasingly powerful for the prediction, validation and control of many physical processes. A computer simulation run, based on a virtual platform, requires only computational resources that are rather inexpensive in today's computing environment. In contrast, a physical experiment usually requires more facilities, materials, and human labor. As a consequence, a typical computer simulation run is much cheaper than its corresponding physical experiment trial. The economic benefits of computer simulations make them particularly useful and attractive in scientific and engineering research.
As a branch of statistics, \textit{design of experiments} mainly studies the methodologies on the planning, analysis and optimization of physical experiments \citep{wu2011experiments}. Given the rapid spread of computer simulations, it is beneficial to develop theory and methods for the design and analysis of computer simulation experiments. This emerging field is commonly referred to as \textit{computer experiments}. We refer to \cite{santner2003design} for more details.

The input variables of a computer experiment normally consist of factors which can be controlled in the physical process, referred to as the \textit{control variables}, as well as some model parameters. These model parameters represent certain intrinsic properties of the physical system. For example, to simulate a heat transfer process, we need to solve a heat equation. The formulation of the equation requires the environmental settings and the initial conditions of the system which can be controlled physically, as well as the thermal conductivity which is uncontrollable and cannot be measured directly in general. For most computer simulations, the prediction accuracy of the computer model is closely related to the choice of the model parameters. A standard method for determining the unknown model parameters is to estimate them by comparing the computer outputs and the physical responses. Such a procedure is known as the \textit{calibration} for computer models, and the model parameters to be identified are called the \textit{calibration parameters}. \cite{kennedy2001bayesian} first study the calibration problem using ideas and methods in computer experiments. They propose a Bayesian hierarchical model to estimate the calibration parameters by computing their posterior distributions. \cite{tuo2014calibration} show that the Kennedy-O'Hagan method may render unreasonable estimates for the calibration parameters. Given the widespread use of the Kennedy-O'Hagan method, it will be desirable to make a comprehensive assessment about this method. For brevity, we sometimes \textit{refer to Kennedy-O'Hagan as KO}.

%In this paper, we study the prediction based on Kennedy-O'Hagan's approach.
%\cite{kennedy2001bayesian} suggests a method to estimate the model parameters using physical data
This paper endeavors to study the prediction performance of the Kennedy-O'Hagan approach. First we adopt the framework of \cite{tuo2014calibration} which assumes the physical observations to be non-random. Interpolation theory in the native spaces becomes the key mathematical tool in this part. Then, we study the more realistic situation where the physical data are noisy. We employ the asymptotic theory of the smoothing splines in the Sobolev spaces to obtain the rate of convergence of the KO predictor in this case.

This article is organized as follows. In Section \ref{Sec:KO} we review the Bayesian method proposed by \cite{kennedy2001bayesian} for calibrating the model parameters and predicting for new physical responses. In Section \ref{Sec:Theory} we present our main results on the asymptotic theory on the prediction performance of the KO method. Concluding remarks and further discussions are made in Section \ref{Sec:discussions}. Some technical proofs are given in Appendix \ref{App:proof}.

\section{Review on the Kennedy-O'Hagan Method}\label{Sec:KO}

In this section we review the Bayesian method proposed by \cite{kennedy2001bayesian}. The formulation of this approach can be generalized to some extend. See, for example, \cite{higdon2004combining}.

Denote the experimental region for the control variables as $\Omega$. We suppose that $\Omega$ is a convex and compact subset of $\mathbb{R}^d$. Let $\{x_1,\ldots,x_n\}\subset\Omega$ be the set of design points for the physical experiment. Denote the responses of the $n$ physical experimental runs by $y_1^p,\ldots,y_n^p$ respectively, with $p$ standing for ``physical''. Let $\Theta$ be the domain of the calibration parameter. In this article, we suppose the computer model is \textit{deterministic}, i.e., the computer output is a deterministic function of the control variables and the calibration parameters, denoted by $y^s(x,\theta)$ for $x\in\Omega,\theta\in\Theta$ with $s$ standing for ``simulation''.

We consider two types of computer models. The first is called ``cheap computer simulations''. In these problems each run of the computer code takes only a short time so that we can call the computer simulation code inside our statistical analysis program which is usually based on an iterative algorithm like the Markov Chain Monte Carlo (MCMC). The second is called ``expensive computer simulations''. In these problems each run of the computer code takes a long time so that it is unrealistic to embed the computer simulation code into an iterative algorithm. A standard approach in computer experiments is to run the computer code over a set of selected points, and build a \textit{surrogate model} based on the obtained computer outputs to approximate the underlying true function. The surrogate model can be evaluated much faster. In the statistical analysis, the response values from the surrogate model are used instead of those from the original computer model.

\subsection{The Case of Cheap Computer Simulations}\label{Sec:cheap}

%\cite{kennedy2001bayesian} only discusses the modeling and analysis for expensive computer simulations.
We model the physical response $y^p$ in the following nonparametric manner
\begin{eqnarray}
y_i^p=\zeta(x_i)+e_i,\label{NP}
\end{eqnarray}
where $\zeta(\cdot)$ is an underlying function, referred to as the \textit{true process}, and $e_i$'s are the observation error. We assume $e_i$'s are independent and identically distributed normal random variables with mean zero and unknown variance $\sigma^2$.
The computer output function and the physical true process are linked by
\begin{eqnarray}
\zeta(\cdot)=y^s(\cdot,\theta_0)+\delta(\cdot),\label{KO}
\end{eqnarray}
where $\theta_0$ denotes the ``true'' calibration parameter (from a physical point of view), and $\delta$ denotes an underlying discrepancy function between the physical process and the computer model under the true calibration parameters. It is reasonable to believe that in most computer experiment problems, the discrepancy function $\delta$ should be nonzero and possibly highly nonlinear because the computer codes are usually built under assumptions or simplifications that do not hold true in reality.

To estimate $\theta_0$ and $\delta$, we follow a standard Bayesian procedure by imposing certain prior distributions on the unknown parameters $\theta_0$ and $\sigma^2$ and the unknown function $\delta(\cdot)$. In the computer experiment literature, a prominent method is to use a Gaussian process as the prior for an unknown function \citep{santner2003design}. There are two major reasons for choosing Gaussian processes. First, the sample paths of a Gaussian process are smooth if a smooth covariance function is chosen, which can be beneficial when the target function is smooth as well. Second, the computational burden of the statistical inference and prediction for a Gaussian process model is relatively low. Specifically, we use a Gaussian process with mean zero and covariance function $\tau^2 C_\gamma(\cdot,\cdot)$ as the prior of $\delta(\cdot)$, where $C_\gamma$ is a stationary kernel with hyper-parameter $\gamma$.

In view of the finite-dimensional distribution of a Gaussian process, given $\tau^2$ and $\gamma$, $\delta(\mathbf{x})=(\delta(x_1),\ldots,\delta(x_n))^\text{T}$ follows the multivariate normal distribution $N(0,\tau^2\Sigma_\gamma)$, where $\Sigma_\gamma=(C_\gamma(x_i,x_j))_{i j}$. In order to discuss the prediction problem later, we apply the data augmentation algorithm of \cite{tanner1987calculation} and consider the posterior distribution of $(\theta_0,\delta(\mathbf{x}),\sigma^2,\gamma)$ given by
\begin{eqnarray}
&&\pi(\theta_0,\delta(\mathbf{x}),\tau^2,\sigma^2,\gamma|\mathbf{y}^p)\nonumber\\
&\propto& \pi(\mathbf{y}^p|\theta_0,\delta(\mathbf{x}),\tau^2,\sigma^2,\gamma) \pi(\delta(\mathbf{x})|\theta_0,\tau^2,\sigma^2,\gamma)\pi(\theta_0,\tau^2,\sigma^2,\gamma)\nonumber
\\&\propto& \sigma^{-n/2}\exp\left\{-\frac{1}{2\sigma^2}\|\mathbf{y}^p-y^s(\mathbf{x},\theta_0)-\delta(\mathbf{x})\|^2\right\}\nonumber\\
&&\times\tau^{-n/2}\left(\det\Sigma_\gamma\right)^{-1/2} \exp\left\{-\frac{\delta(\mathbf{x})^\text{T}\Sigma_\gamma^{-1}\delta(\mathbf{x})}{2\tau^2}\right\}\pi(\theta_0,\tau^2, \sigma^2, \gamma),\label{cheap}
\end{eqnarray}
where $\mathbf{y}^p=(y^p_1,\ldots,y^p_n)^\text{T}, y^s(\mathbf{x},\theta_0)=(y^s(x_1,\theta_0),\ldots,y^s(x_n,\theta_0))^\text{T}$. % and $I_n$ denotes the $n\times n$ identity matrix.
It is not time-consuming to evaluate the posterior density function $\pi(\cdot,\cdot,\cdot,\cdot,\cdot|\mathbf{y}^p)$ because the computer code is cheap to run.
A standard MCMC procedure can then be employed to draw samples from the posterior distribution. We refer to \cite{higdon2004combining} for further details.

In this work, we pay special attention to the prediction for a new physical reponse at an untried point $x_{new}$, denoted as $y^p(x_{new})$. Samples from the posterior predictive distribution of $y^p(x_{new})$ can be drawn along with the MCMC sampling. To see this, we note that in view of the Gaussian process assumption, given $\delta(\mathbf{x})$ and $\gamma$, $\delta(x_{new})$ follows the normal distribution
\begin{eqnarray*}
N(\Sigma_1^\text{T}\Sigma_\gamma^{-1}\delta(\mathbf{x}),\tau^2(C_\gamma(x_{new},x_{new}) -\Sigma_1^\text{T}\Sigma_\gamma^{-1}\Sigma_1)),
\end{eqnarray*}
where $\Sigma_1=(C_\gamma(x_1,x_{new}),\ldots,C_\gamma(x_n,x_{new}))^\text{T}$. Because in each iteration of the MCMC procedure a sample of $(\delta(\mathbf{x}),\theta_0,\gamma,\sigma^2)$ is drawn, we can draw a sample of $y^p(x_{new})$ from its posterior distribution $\pi(y^p(x_{new})|\mathbf{y}^p,\delta(\mathbf{x}),\theta_0,\tau^2,\gamma,\sigma^2)$, which is the multivariate normal distribution
\begin{eqnarray}
 N(y^s(x_{new},\theta_0)+\Sigma_1^\text{T}\Sigma_\gamma^{-1}\delta(\mathbf{x}), \tau^2(C_\gamma(x_{new},x_{new}) -\Sigma_1^\text{T}\Sigma_\gamma^{-1}\Sigma_1)+\sigma^2).\label{pred_mean}
\end{eqnarray}

\subsection{The Case of Expensive Computer Simulations}\label{Sec:expensive}

When the computer code is expensive to run, it is intractable to run MCMC based on (\ref{cheap}) directly. Instead, we need a \textit{surrogate} model to approximate the computer output function $y^s(\cdot,\cdot)$. In this setting \cite{kennedy2001bayesian} use the Gaussian process modeling again. Suppose we first run the computer simulation over a set of design points $\{(x_1^s,\theta_1^s),\ldots,(x_l^s,\theta_l^s)\}\subset\Omega\times\Theta$. We choose a Gaussian process with mean $m_\beta(\cdot)$ and covariance function $\tau'^2 C^s_{\gamma'}(\cdot,\cdot)$ as the prior for $y^s$, where $\beta,\tau'$ and $\gamma'$ are hyper-parameters. Besides, the prior processes of $y^s$ and $\delta$ are assumed to be independent.

The Bayesian analysis for the present model is similar to that in Section \ref{Sec:cheap} but with more cumbersome derivations.
We write $\mathbf{y}^s:=(y^s(x_1^s,\theta_1^s),\ldots,y^s(x_l^s,\theta_l^s))^\text{T}$ and define $(n+l)$-dimensional vectors
\begin{eqnarray*}
\mathbf{x}^E=(x^E_1,\ldots,x^E_{n+l})^\text{T}&:=&(x_1,\ldots,x_n,x_1^s,\ldots,x_l^s)^\text{T},\\
\mathbf{\theta}^E=(\theta^E_1,\ldots,\theta^E_{n+l})^\text{T}&:=&(\theta_0,\ldots,\theta_0,\theta_1^s,\ldots,\theta_l^s)^\text{T}.
\end{eqnarray*}
By (\ref{NP}) and (\ref{KO}), the joint distribution of $\mathbf{y}^p$ and $\mathbf{y}^s$ conditional on $\theta_0,\sigma^2,\gamma,\beta$ and $\tau$ is
\begin{eqnarray*}
(\mathbf{y}^p,\mathbf{y}^s)|\sigma^2,\gamma,\beta,\tau^2,\tau'^2,\gamma'\sim N\left(
m_\beta(\mathbf{x}^E),\Sigma_E+
\left(
  \begin{array}{cc}
    \Sigma_{1 1}+\sigma^2 I_n & 0 \\
    0 & 0 \\
  \end{array}
\right)
\right),
\end{eqnarray*}
where $m_\beta(\mathbf{x}^E)=(m_\beta(x_1^E),\ldots,m_\beta(x_{n+l}^E))^\text{T}$ and
\begin{eqnarray*}
\Sigma_E&=&\left(\tau'^2 C^s_{\gamma'}\left(\left(x_i^E,\theta_i^E\right),\left(x_j^E,\theta_j^E\right)\right)\right )_{i j},\\
\Sigma_{1 1}&=&\left(\tau^2 C_\gamma\left(x_i,x_j\right)\right)_{i j}.
\end{eqnarray*}
Then the posterior distribution of the parameters is given by
\begin{eqnarray*}
\pi(\theta_0,\sigma^2,\gamma,\beta,\tau^2,\tau'^2,\gamma'|\mathbf{y}^p,\mathbf{y}^s)\propto \pi(\mathbf{y}^p,\mathbf{y}^s|\theta_0,\sigma^2,\gamma,\beta,\tau^2,\tau'^2,\gamma') \pi(\theta_0,\sigma^2,\gamma,\beta,\tau^2,\tau'^2,\gamma').
\end{eqnarray*}
The parameter estimation proceeds in a similar manner to the MCMC scheme discussed in Section \ref{Sec:cheap}. As before, the prediction for the true process can be done along with the MCMC iterations. Noting the fact that $(y^p(x_{new}),\mathbf{y}^p,\mathbf{y}^s)$ follows a multivariate normal distribution given the model parameters, the posterior predictive distribution of $y^p(x_{new})$ can be obtained using the Bayes' theorem.

It can be seen that the modeling and analysis for the KO method with expensive computer code is much more complicated than that with cheap computer code. For the ease of mathematical analysis, our theoretical studies in the next section considers only the cases with cheap code. Hence, we omit the detailed formulae of the posterior density of the model parameters and the posterior predictive distribution of $y^p(x_{new})$ in this section.

\section{Theoretical Studies}\label{Sec:Theory}

In this section we conduct some theoretical study on the power of prediction of the KO method. For the ease of the mathematical treatment, we only consider the case of cheap computer code, because the formulae for the case of expensive computer code are much more complicated and cumbersome as shown in Section \ref{Sec:expensive}. We believe that this simplification does not affect our general conclusion.

The mathematical treatment to develop the asymptotic theory for the KO method also depend on the choice of the correlation family $C_\gamma$. In the present work, we restrict ourselves with the Mat\'{e}rn family of kernel functions \citep{stein1999interpolation}, defined as
\begin{eqnarray}\label{matern}
C_{\upsilon,\gamma}(s,t)=\frac{1}{\Gamma(\upsilon)2^{\upsilon-1}}\left(2\sqrt{\upsilon}\gamma \|s-t\|\right)^\upsilon K_\upsilon\left(2\sqrt{\upsilon}\gamma\|s-t\|\right),
\end{eqnarray}
where $K_\upsilon$ is the modified Bessel function of the second kind. In Mat\'{e}rn family, the model parameter $\upsilon$ dominates the smoothness of the process and $\gamma$ is a scale parameter. Because the smoothness parameter $\upsilon$ has an effect on the rate of convergence of the prediction, for simplicity we suppose $\upsilon$ is \textit{fixed} in the entire data analysis.

All proofs in this section are postponed to Appendix \ref{App:proof}.

\subsection{A Function Approximation Perspective}\label{Sec:Approximation}

In this section we follow the theoretical framework of \cite{tuo2014calibration} to study the prediction performance of the KO method. Under this framework, the physical responses are assumed to have no random error, i.e., $\epsilon_i$'s in (\ref{NP}) are zero. This is an unrealistic assumption in practice. But this assumption simplifies the model structure, so that we are able to find some mathematical tools which help us to understand certain intrinsic properties of the KO method.

From (\ref{NP}), we have $y_i^p=\zeta(x_i)$. We remind that $\zeta$ is indeed a deterministic function (as the expectation of the physical response). Therefore, we will regard the Gaussian process modeling technique used in the KO method as a way of reconstructing the function $\zeta$ based on samples $\zeta(x_i)$.

An immediate consequence of the deterministic assumption is $\delta(\mathbf{x})=\mathbf{y}^p-y^s(\mathbf{x},\theta_0)$, i.e., $\delta(\mathbf{x})$ is determined by $\theta_0$ given the observations. Thus (\ref{cheap}) is not applicable. Instead, we have
\begin{eqnarray*}
&&\pi(\theta_0,\tau^2,\gamma|\mathbf{y}^p)\propto \pi(\mathbf{y}^p|\theta_0,\tau^2,\gamma)\pi(\theta_0,\tau^2,\gamma)\nonumber\\
&\propto&(\det \Sigma_\gamma)^{-1/2}\exp\left\{-\frac{1}{2}(\mathbf{y}^p-y^s(\mathbf{x},\theta_0))^\text{T} \Sigma_\gamma^{-1} (\mathbf{y}^p-y^s(\mathbf{x},\theta_0)\right\}\pi(\theta_0,\tau^2,\gamma).
\end{eqnarray*}

To differentiate between the true process $\zeta$ and its estimate based on the observations, we denote a draw from the predictive distribution $\pi(\zeta(x_{new}))$ by $\zeta^{\text{rep}}(x_{new})$.
%Then (\ref{pred_mean}) becomes
Then the posterior predictive distribution $\pi(\zeta^{\text{rep}}(x_{new})|\theta_0,\gamma,\mathbf{y}^p)$ is
\begin{eqnarray}
N\left(y^s(x_{new},\theta_0)+\Sigma_1^\text{T}\Sigma_\gamma^{-1}(\mathbf{y}^p-y^s(\mathbf{x},\theta_0)), \tau^2(C_{\upsilon,\gamma}(x_{new},x_{new})- \Sigma_1^\text{T}\Sigma_\gamma^{-1}\Sigma_1)\right).\label{posterior}
\end{eqnarray}

We now suppose the prior distribution $\pi(\theta_0,\tau^2,\gamma)$ is separable, i.e., $\pi(\theta_0,\tau^2,\gamma)=\pi(\theta_0)\pi(\tau^2)\pi(\gamma)$. Let $S_\theta,S_{\tau^2}$ and $S_\gamma$ denote the supports of the distributions $\pi(\theta_0),\pi(\tau^2)$ and $\pi(\gamma)$ respectively. For the ease of mathematical treatment, we further suppose that $S_\theta$ is a compact subset of $\mathbf{R}$, and $S_{\tau^2}\subset[0,\tau_0^2],S_\gamma\subset[\gamma_1,\gamma_2]$ for some $0<\tau_0^2<+\infty$, $0<\gamma_1<\gamma_2<+\infty$. The independence assumption of the prior distributions can be replaced with a more general assumption, which would not affect the validity of our theoretical analysis. However, the compact support assumption is technically unavoidable in the current treatment. Because here we only focus on the posterior mode, the use of the compact support assumption does not affect the practical applicability of the results. %Let $S=S_\theta\times S_\gamma$.

The aim of this section is to study the asymptotic behavior of
\begin{eqnarray*}
\hat{\mu}_{\theta,\gamma}&=&y^s(x_{new},\theta)+\Sigma_1^\text{T}\Sigma_\gamma^{-1}(\mathbf{y}^p-y^s(\mathbf{x},\theta)),\\
\hat{\varsigma}^2_{\tau^2,\gamma}&=&\tau^2(C_{\upsilon,\gamma}(x_{new},x_{new})-\Sigma_1^\text{T}\Sigma_\gamma^{-1}\Sigma_1),
\end{eqnarray*}
as the design points become dense in $\Omega$, for $(\theta,\tau^2,\gamma)\in S_\theta,S_{\tau^2},S_\gamma$. Clearly, the true posterior mean of $\zeta^{\text{rep}}(x_{new})$ given by (\ref{posterior}) is
\begin{eqnarray*}
E[\zeta^{\text{rep}}(x_{new})|\mathbf{y}^p]=E[\hat{\mu}_{\hat{\theta},\hat{\gamma}}|\mathbf{y}^p],
\end{eqnarray*}
where $(\hat{\theta},\hat{\gamma})$ follows the posterior distribution $\pi(\theta_0,\gamma|\mathbf{y}^p)$. Note that
\begin{eqnarray*}
&&|E[\zeta^{\text{rep}}(x_{new})|\mathbf{y}^p]-\zeta(x_{new})|\\
&=& \left|E\left\{E[\zeta^{\text{rep}}(x_{new})-\zeta(x_{new})|\mathbf{y}^p,\hat{\theta},\hat{\gamma}] \big| \mathbf{y}^p\right\}\right|\\
&\leq&\sup_{\theta\in S_\theta,\gamma\in S_\gamma} \left|E[\zeta^{\text{rep}}(x_{new})-\zeta(x_{new})|\mathbf{y}^p,\theta,\gamma]\right| \\
&=&\sup_{\theta\in S_\theta,\gamma\in S_\gamma}|\hat{\mu}_{\theta,\gamma}-\zeta(x_{new})|,
\end{eqnarray*}
i.e., the bias of the posterior predictive mean can be bounded by the supremum of $|\hat{\mu}_{\theta,\gamma}-\zeta(x_{new})|$. Similarly, we find
\begin{eqnarray*}
\text{Var}(\zeta^{\text{rep}}(x_{new})|\mathbf{y}^p)\leq \sup_{\tau^2\in S_{\tau^2},\gamma\in S_\gamma}\hat{\varsigma}^2_{\tau^2,\gamma}.
\end{eqnarray*}

In this section we will bound $\sup_{\theta\in S_\theta,\gamma\in S_\gamma}|\hat{\mu}_{\theta,\gamma}-\zeta(x_{new})|$ and $\sup_{\tau^2\in S_{\tau^2}\gamma\in S_\gamma}\hat{\varsigma}^2_{\tau^2,\gamma}$.
To this end, we resort to the theory of native spaces. We refer to \cite{wendland2005scattered} for detailed discussions. For a symmetric and positive definite function $\Phi$ over $\Omega\times\Omega$, consider the linear space
\begin{eqnarray*}
F_\Phi(\Omega):=\left\{\sum_{i=1}^m \alpha_i\Phi(s_i,\cdot):m\in\mathbb{N}^+,\alpha_i\in\mathbf{R}\right\},
\end{eqnarray*}
equipped with the inner product
\begin{eqnarray}
\left\langle\sum_{i=1}^m \alpha_i\Phi(s_i,\cdot),\sum_{j=1}^l \beta_j\Phi(t_j,\cdot)\right\rangle=\sum_{i=1}^m\sum_{j=1}^l \alpha_i\beta_j\Phi(s_i,t_j).\label{innerproduct}
\end{eqnarray}
The completion of $F_\Phi(\Omega)$ with respect to its inner product is called the native space generated by $\Phi$, denoted by $\mathcal{N}_{\Phi}(\Omega)$. Denote the inner product and the norm of $\mathcal{N}_\Phi(\Omega)$ by $\langle\cdot,\cdot\rangle_{\mathcal{N}_\Phi(\Omega)}$ and $\|\cdot\|_{\mathcal{N}_\Phi(\Omega)}$ respectively.

Now we state the interpolation scheme in the native space. Let $f\in\mathcal{N}_\Phi(\Omega)$ and $\mathbf{x}=\{x_1,\ldots,x_n\}$ a set of distinct points in $\Omega$. Let $\mathbf{y}=(f(x_1),\ldots,f(x_n))^\text{T}$ be the observerd data. Define
\begin{eqnarray}
s_{f,\mathbf{x}}(x)=\sum_{i=1}^n u_i\Phi(x_i,x),\label{interpolant}
\end{eqnarray}
where $u=(u_1,\ldots,u_n)^\text{T}$ is given by the linear equation
\begin{eqnarray*}
\mathbf{y}=\Phi(\mathbf{x},\mathbf{x})u
\end{eqnarray*}
for $(\Phi(\mathbf{x},\mathbf{x}))_{i j}=\Phi(x_i,x_j)$.

Clearly, $s_{f,\mathbf{x}}\in F_\Phi$ and thus $s_{f,\mathbf{x}}\in \mathcal{N}_\Phi(\Omega)$.
The next lemma can be found in \cite{wendland2005scattered}. For the completeness of the present article, we provide its proof in Appendix \ref{App:proof}.

\begin{lemma}\label{lemma:1}
For $f\in\mathcal{N}_\Phi(\Omega)$ and a set of design points $\mathbf{x}\subset\Omega$,
\begin{eqnarray*}
\langle s_{f,\mathbf{x}},f-s_{f,\mathbf{x}}\rangle_{\mathcal{N}_\Phi(\Omega)}=0.
\end{eqnarray*}
\end{lemma}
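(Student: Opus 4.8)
The plan is to reduce the claim to two elementary facts about the native space: the reproducing property of the kernel $\Phi$, and the interpolation condition satisfied by $s_{f,\mathbf{x}}$. Since $s_{f,\mathbf{x}}=\sum_{i=1}^n u_i\Phi(x_i,\cdot)$ lies in $F_\Phi(\Omega)$, expanding the inner product by linearity in the first argument gives
\begin{eqnarray*}
\langle s_{f,\mathbf{x}},f-s_{f,\mathbf{x}}\rangle_{\mathcal{N}_\Phi(\Omega)}=\sum_{i=1}^n u_i\langle \Phi(x_i,\cdot),f-s_{f,\mathbf{x}}\rangle_{\mathcal{N}_\Phi(\Omega)},
\end{eqnarray*}
so it suffices to show that each summand vanishes.

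The key step is to establish the reproducing property: for every $g\in\mathcal{N}_\Phi(\Omega)$ and every $t\in\Omega$ one has $\langle\Phi(t,\cdot),g\rangle_{\mathcal{N}_\Phi(\Omega)}=g(t)$. On the dense subspace $F_\Phi(\Omega)$ this is immediate from the definition (\ref{innerproduct}): for $g=\sum_j\beta_j\Phi(s_j,\cdot)$, taking the single-term element $\Phi(t,\cdot)$ in the first slot yields $\langle\Phi(t,\cdot),g\rangle_{\mathcal{N}_\Phi(\Omega)}=\sum_j\beta_j\Phi(t,s_j)=g(t)$, using the symmetry of $\Phi$. To pass to the completion, I would use the Cauchy--Schwarz inequality to obtain $|h(t)|=|\langle\Phi(t,\cdot),h\rangle_{\mathcal{N}_\Phi(\Omega)}|\leq\Phi(t,t)^{1/2}\|h\|_{\mathcal{N}_\Phi(\Omega)}$ for $h\in F_\Phi(\Omega)$; hence point evaluation is a bounded linear functional, every Cauchy sequence in $F_\Phi(\Omega)$ converges pointwise, the elements of the completion may be identified with genuine functions, and letting $g$ be a limit of such a sequence preserves the reproducing identity for all $g\in\mathcal{N}_\Phi(\Omega)$.

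With the reproducing property in hand, I would combine it with the interpolation condition. By construction $u$ solves $\mathbf{y}=\Phi(\mathbf{x},\mathbf{x})u$, so evaluating the interpolant at a design point gives $s_{f,\mathbf{x}}(x_j)=\sum_i u_i\Phi(x_i,x_j)=(\Phi(\mathbf{x},\mathbf{x})u)_j=f(x_j)$; that is, $s_{f,\mathbf{x}}(x_i)=f(x_i)$ for every $i$. Applying the reproducing property with $t=x_i$ and $g=f-s_{f,\mathbf{x}}$ then yields $\langle\Phi(x_i,\cdot),f-s_{f,\mathbf{x}}\rangle_{\mathcal{N}_\Phi(\Omega)}=f(x_i)-s_{f,\mathbf{x}}(x_i)=0$, so each term of the displayed sum is zero and the lemma follows.

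The main obstacle is the extension of the reproducing property from $F_\Phi(\Omega)$ to its completion, where a priori the elements are equivalence classes of Cauchy sequences rather than functions on $\Omega$. The boundedness of point evaluation is precisely what licenses identifying the abstract completion with a space of functions on which $\Phi$ reproduces, and once that identification is justified the remainder of the argument is routine linear algebra.
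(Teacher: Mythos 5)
Your proof is correct, but it follows a different route from the paper's. The paper first treats $f\in F_\Phi(\Omega)$ directly: it writes $f=\sum_{i=1}^{n+m}\alpha_i\Phi(\cdot,x_i)$ over an enlarged point set, partitions the Gram matrix into blocks, and computes the inner product from the definition (\ref{innerproduct}) as $u^\text{T}(A_1a_1+A_2a_2-A_1u)=u^\text{T}(\mathbf{y}-\mathbf{y})=0$; the general case is then obtained by taking limits along a sequence $f_n\in F_\Phi(\Omega)$ converging to $f$. You instead establish the reproducing property $\langle\Phi(t,\cdot),g\rangle_{\mathcal{N}_\Phi(\Omega)}=g(t)$ on all of $\mathcal{N}_\Phi(\Omega)$ and reduce the claim to the interpolation condition $s_{f,\mathbf{x}}(x_i)=f(x_i)$, which makes the orthogonality a one-line consequence of linearity. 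Your approach is the more conceptual and standard RKHS argument, and it has the virtue of making explicit the point the paper's limiting step quietly relies on: both proofs ultimately need point evaluation to be bounded on the completion (the paper needs $f_n\to f$ in norm to imply $f_n(x_i)\to f(x_i)$ so that $s_{f_n,\mathbf{x}}\to s_{f,\mathbf{x}}$, which is exactly the Cauchy--Schwarz bound $|h(t)|\leq\Phi(t,t)^{1/2}\|h\|_{\mathcal{N}_\Phi(\Omega)}$ you prove). The paper's computation, on the other hand, stays entirely within finite-dimensional linear algebra on the dense subspace and never needs to identify elements of the completion as functions until the final limiting step. Both are complete once the boundedness of point evaluation is granted; yours supplies that justification, while the paper defers it to the cited reference.
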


From Lemma \ref{lemma:1} we can deduce the Pythagorean identity
\begin{eqnarray}
\|s_{f,\mathbf{x}}\|^2_{\mathcal{N}_\Phi(\Omega)}+\|f-s_{f,\mathbf{x}}\|^2_{\mathcal{N}_\Phi(\Omega)} =\|f\|^2_{\mathcal{N}_\Phi(\Omega)}.\label{pathagorean}
\end{eqnarray}
Now we consider an arbitrary function $h\in\mathcal{N}_\Phi(\Omega)$ which interpolates $f$ over $\mathbf{x}$, denoted as $f|_\mathbf{x}=h|_\mathbf{x}$. Then we have $s_{f,\mathbf{x}}=s_{h,\mathbf{x}}$ and thus (\ref{pathagorean}) also holds true if we replace $f$ with $h$. This suggests $\|s_{f,\mathbf{x}}\|_{\mathcal{N}_\Phi(\Omega)}\leq \|h\|_{\mathcal{N}_\Phi(\Omega)}$,
which yields the following optimality condition
\begin{eqnarray}
s_{f,\mathbf{x}}=\operatorname*{argmin}_{\substack{h\in\mathcal{N}_\Phi(\Omega)\\h|_\mathbf{x}=f|_\mathbf{x}}} \|h\|_{\mathcal{N}_\Phi(\Omega)},\label{optimality}
\end{eqnarray}
i.e., $s_{f,\mathbf{x}}$ has the minimum native norm among all functions in $\mathcal{N}_\Phi(\Omega)$ that interpolate $f$ over $\mathbf{x}$.

It can be shown that the native space generated by the Mat\'{e}rn kernel $C_{\upsilon,\gamma}$ for $\upsilon\geq 1$ coincides with the (fractional) Sobolev space $H^{\upsilon+d/2}(\Omega)$ \citep{adams2003sobolev}, and the norms are equivalent. See \cite{tuo2014calibration} for details. Moreover, we can also prove that the norms of the native spaces generated by $C_{\upsilon,\gamma}$ for a set of $\gamma$ values bounded away from 0 and $+\infty$ are equivalent.

\begin{lemma}\label{th:equivalence}
Suppose $\upsilon\geq 1$. There exist constants $c_1,c_2>1$, so that
\begin{eqnarray}
c_1\|f\|_{H^{\upsilon+d/2}(\Omega)}\leq \|f\|_{\mathcal{N}_{C_{\upsilon,\gamma}}(\Omega)}\leq c_2\|f\|_{H^{\upsilon+d/2}(\Omega)}
\end{eqnarray}
holds for all $f\in H^{\upsilon+d/2}(\Omega)$ and all $\gamma\in [\gamma_1,\gamma_2]$.
\end{lemma}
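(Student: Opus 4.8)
The plan is to work first on all of $\mathbb{R}^d$ in the Fourier domain, establish the norm equivalence there with constants that do not depend on $\gamma$, and then descend to $\Omega$ through the extension/restriction characterization of both the native space and the Sobolev space. Writing the stationary kernel as $C_{\upsilon,\gamma}(s,t)=\phi_\gamma(s-t)$, I would use the spectral characterization of the native norm: $\|f\|_{\mathcal{N}_{C_{\upsilon,\gamma}}(\mathbb{R}^d)}^2$ equals, up to a fixed constant, $\int_{\mathbb{R}^d}|\hat f(\omega)|^2/\hat\phi_\gamma(\omega)\,d\omega$ (this is the content of the result quoted just before the lemma, for each fixed $\gamma$). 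For the Mat\'ern kernel the spectral density has the closed form $\hat\phi_\gamma(\omega)=a(\upsilon,d)\,\alpha^{2\upsilon}(\alpha^2+\|\omega\|^2)^{-(\upsilon+d/2)}$, where $\alpha=2\sqrt{\upsilon}\gamma$ and $a(\upsilon,d)$ depends only on $\upsilon$ and $d$. Comparing with the Bessel-potential norm $\|f\|_{H^{\upsilon+d/2}(\mathbb{R}^d)}^2=\int(1+\|\omega\|^2)^{\upsilon+d/2}|\hat f(\omega)|^2\,d\omega$, the ratio of the two integrands is $a(\upsilon,d)^{-1}\alpha^{-2\upsilon}R_\gamma(\omega)^{\upsilon+d/2}$ with $R_\gamma(\omega)=(\alpha^2+\|\omega\|^2)/(1+\|\omega\|^2)$.

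The core step is to bound this integrand ratio above and below uniformly in both $\omega$ and $\gamma$. Since $\gamma$ ranges over the compact interval $[\gamma_1,\gamma_2]$ with $\gamma_1>0$, the quantity $\alpha=2\sqrt{\upsilon}\gamma$ stays in $[\alpha_1,\alpha_2]$, bounded away from $0$ and $\infty$, so $\alpha^{-2\upsilon}\in[\alpha_2^{-2\upsilon},\alpha_1^{-2\upsilon}]$. For $t=\|\omega\|^2\ge 0$ the map $t\mapsto(\alpha^2+t)/(1+t)$ is monotone and takes values between $\min(\alpha^2,1)$ and $\max(\alpha^2,1)$; hence $R_\gamma(\omega)$ lies in $[\min(\alpha_1^2,1),\max(\alpha_2^2,1)]$ for every $\omega$ and every admissible $\gamma$. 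Raising to the positive power $\upsilon+d/2$ preserves these bounds, and multiplying the two factors produces positive constants $m\le M$, depending only on $\upsilon,d,\gamma_1,\gamma_2$, with $m\le(\text{integrand ratio})\le M$. Integrating against $|\hat f(\omega)|^2\,d\omega$ then yields $\sqrt{m}\,\|f\|_{H^{\upsilon+d/2}(\mathbb{R}^d)}\le\|f\|_{\mathcal{N}_{C_{\upsilon,\gamma}}(\mathbb{R}^d)}\le\sqrt{M}\,\|f\|_{H^{\upsilon+d/2}(\mathbb{R}^d)}$, uniformly over $\gamma\in[\gamma_1,\gamma_2]$.

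To transfer this to $\Omega$, I would invoke that $\mathcal{N}_{C_{\upsilon,\gamma}}(\Omega)$ is the restriction of $\mathcal{N}_{C_{\upsilon,\gamma}}(\mathbb{R}^d)$ endowed with the minimal-extension norm, and that on the convex compact $\Omega$ the intrinsic norm of $H^{\upsilon+d/2}(\Omega)$ is equivalent, with constants depending only on the geometry of $\Omega$, to the minimal-extension Sobolev norm (via a bounded extension operator). For the upper bound, given $f\in H^{\upsilon+d/2}(\Omega)$ I take a near-optimal Sobolev extension $g$ and use $\|f\|_{\mathcal{N}_{C_{\upsilon,\gamma}}(\Omega)}\le\|g\|_{\mathcal{N}_{C_{\upsilon,\gamma}}(\mathbb{R}^d)}\le\sqrt{M}\,\|g\|_{H^{\upsilon+d/2}(\mathbb{R}^d)}$, which is comparable to $\|f\|_{H^{\upsilon+d/2}(\Omega)}$. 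For the lower bound, I let $g_0$ be the native-optimal extension, so that $\|f\|_{\mathcal{N}_{C_{\upsilon,\gamma}}(\Omega)}=\|g_0\|_{\mathcal{N}_{C_{\upsilon,\gamma}}(\mathbb{R}^d)}\ge\sqrt{m}\,\|g_0\|_{H^{\upsilon+d/2}(\mathbb{R}^d)}\ge\sqrt{m}\,\inf_g\|g\|_{H^{\upsilon+d/2}(\mathbb{R}^d)}$, again comparable to $\|f\|_{H^{\upsilon+d/2}(\Omega)}$. Since every auxiliary constant arises from $\upsilon,d$ and the fixed geometry of $\Omega$, the resulting equivalence constants are independent of $\gamma$.

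The hard part is not the pointwise estimate but the uniformity in $\gamma$: for each fixed $\gamma$ the equivalence is exactly the previously quoted fact, so the entire content lies in showing the comparison constants can be chosen independent of $\gamma$. This is precisely where the hypotheses $\gamma_1>0$ and $\gamma_2<\infty$ are indispensable, since if $\gamma\to 0$ the low-frequency behaviour of $R_\gamma$ degenerates while if $\gamma\to\infty$ the prefactor $\alpha^{-2\upsilon}$ together with $R_\gamma$ blows up, and in either limit no uniform two-sided bound can survive. A secondary point requiring care is that the extension/restriction identities for the two families of spaces rest on the same fixed domain geometry, so the passage from $\mathbb{R}^d$ to $\Omega$ introduces no new $\gamma$-dependence.
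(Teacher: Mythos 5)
Your proposal is correct and follows essentially the same route as the paper's proof: the spectral (Fourier) characterization of the native and Sobolev norms on $\mathbb{R}^d$, the explicit Mat\'ern spectral density, a two-sided bound on the integrand ratio that is uniform over $\gamma\in[\gamma_1,\gamma_2]$ (your monotonicity argument for $(\alpha^2+t)/(1+t)$ is exactly the paper's elementary inequality $(1+b)\min(1,a)\le a+b\le(1+b)\max(1,a)$), and the same extension/restriction machinery to descend from $\mathbb{R}^d$ to $\Omega$.
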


%Now consider the native space $\mathcal{N}_{C_\gamma}(\Omega)$.
Next, we turn to the error estimate of the interpolant $s_{f,\mathbf{x}}$.
\cite{wendland2005scattered} shows that for $u\in H^\mu(\Omega)$ with $u|_\mathbf{x}=0$ and $\lfloor\mu\rfloor>d/2$,
\begin{eqnarray*}
\|u\|_{L_\infty(\Omega)}\leq C h_{\mathbf{x},\Omega}^{\mu-d/2}\|u\|_{H^\mu(\Omega)},
\end{eqnarray*}
provided that $\mathbf{x}$ is ``sufficiently dense'',
where $C$ is independent of $\mathbf{x}$ and $u$; $h_{\mathbf{x},\Omega}$ is the fill distance of the design $\mathbf{x}$ defined as
\begin{eqnarray*}
h_{\mathbf{x},\Omega}=\sup_{x\in\Omega}\min_{x_j\in \mathbf{x}}\|x-x_j\|.
\end{eqnarray*}
Here ``$\mathbf{x}$ is sufficiently dense'' means that its fill distance $h_{\mathbf{x},\Omega}$ is less than a constant $h_0$ depending only on $\Omega$ and $\mu$.
Noting the fact that $(f-s_{f,\mathbf{x}})|_\mathbf{x}=0$ and $f-s_{f,\mathbf{x}}\in H^{\upsilon+d/2}(\Omega)$, we obtain that for $\upsilon\geq 1$,
\begin{eqnarray*}
\|f-s_{f,\mathbf{x}}\|_{L_\infty(\Omega)}\leq C h_{\mathbf{x},\Omega}^{\upsilon}\|f-s_{f,\mathbf{x}}\|_{H^{\upsilon+d/2}(\Omega)},
\end{eqnarray*}
which, together with (\ref{pathagorean}), yields
\begin{eqnarray}
\|f-s_{f,\mathbf{x}}\|_{L_\infty(\Omega)}\leq C h_{\mathbf{x},\Omega}^{\upsilon}\|f\|_{H^{\upsilon+d/2}(\Omega)}.\label{bound}
\end{eqnarray}
Then we apply Lemma \ref{th:equivalence} to prove Lemma \ref{th:bound}.

\begin{lemma}\label{th:bound}
Suppose $\upsilon\geq 1$. For $f\in H^{\upsilon+d/2}(\Omega)$, let $s_{f,\mathbf{x}}$ be the interpolant of $f$ over $\mathbf{x}$ with the kernel $C_{\gamma,\upsilon}$, $\gamma\in[\gamma_1,\gamma_2]$. Then for sufficiently dense $\mathbf{x}$
\begin{eqnarray*}
\|f-s_{f,\mathbf{x}}\|_{L_\infty(\Omega)}\leq C h_{\mathbf{x},\Omega}^{\upsilon}\|f\|_{\mathcal{N}_{C_{\upsilon,\gamma}}(\Omega)},
\end{eqnarray*}
where $C$ is independent of the choices of $f$, $\mathbf{x}$ and $\gamma$.
\end{lemma}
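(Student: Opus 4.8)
The plan is to derive Lemma \ref{th:bound} directly from the Sobolev-norm error estimate (\ref{bound}) by converting the Sobolev norm that appears on its right-hand side into the native-space norm, using the equivalence supplied by Lemma \ref{th:equivalence}, while carefully tracking that every constant that enters can be chosen independently of $\gamma$. Since (\ref{bound}) already bounds $\|f-s_{f,\mathbf{x}}\|_{L_\infty(\Omega)}$ by $C\,h_{\mathbf{x},\Omega}^{\upsilon}\,\|f\|_{H^{\upsilon+d/2}(\Omega)}$, only the last factor needs to be rewritten, so essentially no new analysis is required --- the real work is the bookkeeping of constants. This is precisely the step flagged in the text by the sentence preceding the statement.

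Concretely, I would proceed in three short steps. First, I would invoke (\ref{bound}), which holds for every sufficiently dense $\mathbf{x}$ (fill distance below the threshold $h_0$ furnished by Wendland's sampling inequality) with a constant $C$ depending only on $\Omega$ and the smoothness order $\mu=\upsilon+d/2$. Second, I would apply the left-hand inequality of Lemma \ref{th:equivalence}, namely $c_1\|f\|_{H^{\upsilon+d/2}(\Omega)}\le\|f\|_{\mathcal{N}_{C_{\upsilon,\gamma}}(\Omega)}$, rearranged into the upper bound $\|f\|_{H^{\upsilon+d/2}(\Omega)}\le c_1^{-1}\|f\|_{\mathcal{N}_{C_{\upsilon,\gamma}}(\Omega)}$. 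Substituting this into (\ref{bound}) yields
\[
\|f-s_{f,\mathbf{x}}\|_{L_\infty(\Omega)}\le \frac{C}{c_1}\,h_{\mathbf{x},\Omega}^{\upsilon}\,\|f\|_{\mathcal{N}_{C_{\upsilon,\gamma}}(\Omega)},
\]
which is the asserted bound with the new constant $C/c_1$.

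The only point requiring care --- and what I regard as the crux --- is the uniformity of the final constant in $\gamma$, since the lemma explicitly claims a $C$ independent of $f$, $\mathbf{x}$, and $\gamma$. Two constants are in play, and both must be $\gamma$-free. The constant $C$ inherited from (\ref{bound}) originates in Wendland's sampling inequality applied to $u=f-s_{f,\mathbf{x}}$, whose constant depends only on $\Omega$ and $\mu=\upsilon+d/2$ and not on the interpolation kernel, so it carries no $\gamma$-dependence; and the reduction from the $\|f-s_{f,\mathbf{x}}\|_{H^{\upsilon+d/2}}$-form to the $\|f\|_{H^{\upsilon+d/2}}$-form used in deriving (\ref{bound}) relied only on the native-norm Pythagorean identity (\ref{pathagorean}) together with Lemma \ref{th:equivalence}, introducing no further $\gamma$-dependence. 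The constant $c_1$ is, by the statement of Lemma \ref{th:equivalence}, a single number valid simultaneously for all $\gamma\in[\gamma_1,\gamma_2]$. Hence $C/c_1$ is independent of $f$, $\mathbf{x}$, and $\gamma$, which is exactly the role for which the uniform equivalence in Lemma \ref{th:equivalence} was established; once this uniformity is in hand the proof is complete.
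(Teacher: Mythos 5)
Your proposal is correct and follows exactly the paper's own route: the paper derives Lemma \ref{th:bound} by combining the Sobolev-norm estimate (\ref{bound}) with the $\gamma$-uniform norm equivalence of Lemma \ref{th:equivalence}, which is precisely your argument. Your additional bookkeeping confirming that both the sampling-inequality constant and $c_1$ are independent of $\gamma\in[\gamma_1,\gamma_2]$ is the right point to emphasize and is consistent with the paper.
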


%We suppose the kernel function $\Phi$ has $2 k$ continuous derivatives. \cite{wendland2005scattered} shows that
%\begin{eqnarray}
%\|f-s_{f,X}\|_{L_\infty(\Omega)}\leq C h_{X,\Omega}^k \|f\|_{\mathcal{N}_\Phi(\Omega)},\label{bound}
%\end{eqnarray}
%where $C$ is independent of $X$ and $f$; $h_{X,\Omega}$ is the fill distance of the design $X$ defined as
%\begin{eqnarray*}
%h_{X,\Omega}=\sup_{x\in\Omega}\min_{x_j\in X}\|x-x_j\|.
%\end{eqnarray*}

Following the notation of \cite{tuo2014calibration}, we define $\epsilon(x,\theta)=\zeta(x)-y^s(x,\theta)$. It is commented by \cite{tuo2014calibration} that in general $\theta_0$ is not estimable due to the identifiability problem, and thus neither is $\delta(\cdot)=\epsilon(\cdot,\theta_0)$. However, as will be shown later, the function $\epsilon(\cdot,\cdot)$ can be consistently estimated using KO calibration. Suppose $\epsilon(\cdot,\theta)\in H^{\upsilon+d/2}(\Omega)$ for each $\theta\in S_\theta$. Let $\epsilon(\mathbf{x},\theta)=(\epsilon(x_1,\theta),\ldots,\epsilon(x_n,\theta))^\text{T}$. Clearly, $\mathbf{y}^p-y^s(\mathbf{x},\theta)=\epsilon(\mathbf{x},\theta)$ and thus
\begin{eqnarray*}
s_{\epsilon(\cdot,\theta),\mathbf{x}}(x_{new})=\Sigma_1^\text{T}\Sigma_\gamma^{-1}(\mathbf{y}^p-y^s(\mathbf{x},\theta)).
\end{eqnarray*}
%Suppose $C_\gamma$ has $2 k$ continuous derivatives for $\gamma\in S_\gamma$.
By (\ref{bound}) we obtain
\begin{eqnarray}
|\hat{\mu}_{\theta,\gamma}-\zeta(x_{new})|&=&|\epsilon(x_{new},\theta)-s_{\epsilon(\cdot,\theta),\mathbf{x}}(x_{new})|\nonumber\\
&\leq& C h^\upsilon_{\mathbf{x},\Omega}\|\epsilon(\cdot,\theta)\|_{H^{\upsilon+d/2}(\Omega)}\nonumber\\
&\leq&C h^\upsilon_{\mathbf{x},\Omega}\sup_{\theta\in S_\theta}\|\epsilon(\cdot,\theta)\|_{H^{\upsilon+d/2}(\Omega)}.\label{bmu}
\end{eqnarray}

The error bound for the variance term can be obtained similarly. Elementary calculations show that
\begin{eqnarray*}
\Sigma_1^\text{T}\Sigma_\gamma^{-1}\Sigma_1=s_{C_{\upsilon,\gamma}(\cdot,x_{new}),\mathbf{x}}(x_{new}).
\end{eqnarray*}
Hence we apply Lemma \ref{th:bound} to find
\begin{eqnarray}
|\tau^2(C_{\upsilon,\gamma}(x_{new},x_{new})-\Sigma_1^\text{T}\Sigma_\gamma^{-1}\Sigma_1)|&=&\tau^2|C_{\upsilon,\gamma}(x_{new},x_{new})- s_{C_{\upsilon,\gamma}(\cdot,x_{new}),\mathbf{x}}(x_{new})|\nonumber\\
&\leq&\tau^2_0 C h^\upsilon_{\mathbf{x},\Omega}\|C_{\upsilon,\gamma}(\cdot,x_{new})\|_{\mathcal{N}_{C_{\upsilon,\gamma}}(\Omega)}\nonumber\\
&=&\tau^2_0 C h^\upsilon_{\mathbf{x},\Omega},\label{bsigma}
\end{eqnarray}
where the last equality follows from the fact that $\|C_{\upsilon,\gamma}(\cdot,x_{new})\|_{\mathcal{N}_{C_{\upsilon,\gamma}}(\Omega)}=1$. We summarize our findings in (\ref{bmu}) and (\ref{bsigma}) as Theorem \ref{th:deterministic}.

\begin{theorem}\label{th:deterministic}
Suppose $\upsilon\geq 1,\gamma\in[\gamma_1,\gamma_2],\tau\leq \tau_0$. Then for a sufficiently dense design $\mathbf{x}$, we have the upper bound for the predictive mean as
\begin{eqnarray*}
\sup_{\theta\in S_\theta,\gamma\in S_\gamma}|\hat{\mu}_{\theta,\gamma}-\zeta(x_{new})|\leq C h^\upsilon_{\mathbf{x},\Omega}\sup_{\theta\in S_\theta}\|\epsilon(\cdot,\theta)\|_{H^{\upsilon+d/2}(\Omega)},%\label{boundderterminisitic}
\end{eqnarray*}
and the upper bound for the predictive variance as
\begin{eqnarray*}
\sup_{\tau^2\in S_\tau^2,\gamma\in S_\gamma}\hat{\varsigma}^2_{\tau^2,\gamma}\leq \tau^2_0 C h^\upsilon_{\mathbf{x},\Omega},
\end{eqnarray*}
with a constant $C$ depending only on $\Omega, \upsilon,\gamma_1,\gamma_2$.
\end{theorem}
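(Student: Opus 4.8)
The plan is to recognize that the theorem merely consolidates the two pointwise estimates derived in (\ref{bmu}) and (\ref{bsigma}) into bounds that are uniform over the prior supports. The substance of the argument is therefore twofold: to reexpress each predictive quantity as a native-space interpolation error so that Lemma \ref{th:bound} applies, and to verify that the resulting constant $C$ can be taken independent of $\theta$, $\gamma$, and $\tau^2$, after which one simply passes to the supremum.

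First, for the predictive mean, I would rewrite the error as an interpolation error. Since $\mathbf{y}^p-y^s(\mathbf{x},\theta)=\epsilon(\mathbf{x},\theta)$, the term $\Sigma_1^\text{T}\Sigma_\gamma^{-1}(\mathbf{y}^p-y^s(\mathbf{x},\theta))$ coincides with the native-space interpolant $s_{\epsilon(\cdot,\theta),\mathbf{x}}(x_{new})$, so that $\hat{\mu}_{\theta,\gamma}-\zeta(x_{new})=s_{\epsilon(\cdot,\theta),\mathbf{x}}(x_{new})-\epsilon(x_{new},\theta)$ upon using $\epsilon(x_{new},\theta)=\zeta(x_{new})-y^s(x_{new},\theta)$. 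Applying Lemma \ref{th:bound} with $f=\epsilon(\cdot,\theta)\in H^{\upsilon+d/2}(\Omega)$ bounds this by $C h^\upsilon_{\mathbf{x},\Omega}\|\epsilon(\cdot,\theta)\|_{H^{\upsilon+d/2}(\Omega)}$; replacing the norm by its supremum over $\theta$ and then taking the supremum over $(\theta,\gamma)$ on the left yields the first inequality.

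Next, for the predictive variance, I would first verify the identity $\Sigma_1^\text{T}\Sigma_\gamma^{-1}\Sigma_1=s_{C_{\upsilon,\gamma}(\cdot,x_{new}),\mathbf{x}}(x_{new})$ directly from the definition (\ref{interpolant}): the interpolant of the kernel section $g=C_{\upsilon,\gamma}(\cdot,x_{new})$ has data vector $g(\mathbf{x})=\Sigma_1$ and coefficients $u=\Sigma_\gamma^{-1}\Sigma_1$, whence $s_{g,\mathbf{x}}(x_{new})=\Sigma_1^\text{T}u=\Sigma_1^\text{T}\Sigma_\gamma^{-1}\Sigma_1$. Lemma \ref{th:bound} then bounds $|C_{\upsilon,\gamma}(x_{new},x_{new})-s_{g,\mathbf{x}}(x_{new})|$ by $C h^\upsilon_{\mathbf{x},\Omega}\|C_{\upsilon,\gamma}(\cdot,x_{new})\|_{\mathcal{N}_{C_{\upsilon,\gamma}}(\Omega)}$. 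By the reproducing property encoded in the inner product (\ref{innerproduct}), $\|C_{\upsilon,\gamma}(\cdot,x_{new})\|^2_{\mathcal{N}_{C_{\upsilon,\gamma}}(\Omega)}=C_{\upsilon,\gamma}(x_{new},x_{new})=1$, so the norm factor is $1$; multiplying by $\tau^2\leq\tau_0^2$ and taking the supremum over $(\tau^2,\gamma)$ gives the second inequality.

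The step I expect to be the main obstacle is guaranteeing that a single constant $C$ works uniformly in $\gamma\in[\gamma_1,\gamma_2]$. The interpolation estimate (\ref{bound}) carries a Sobolev-norm constant that could a priori depend on $\gamma$, because the kernel $C_{\upsilon,\gamma}$ and hence its native inner product vary with $\gamma$. This is exactly where Lemma \ref{th:equivalence} is indispensable: it supplies constants $c_1,c_2$, independent of $\gamma$ on $[\gamma_1,\gamma_2]$, relating each native norm to the fixed Sobolev norm $\|\cdot\|_{H^{\upsilon+d/2}(\Omega)}$. Absorbing these equivalence constants into $C$ renders the bound uniform, so that $C$ depends only on $\Omega,\upsilon,\gamma_1,\gamma_2$ as claimed; the dependence on $\tau^2$ is removed trivially by $\tau^2\leq\tau_0^2$, and that on $\theta$ survives only inside the supremum of $\|\epsilon(\cdot,\theta)\|_{H^{\upsilon+d/2}(\Omega)}$ already present in the statement.
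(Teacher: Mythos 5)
Your proposal is correct and follows essentially the same route as the paper: both reduce $\hat{\mu}_{\theta,\gamma}-\zeta(x_{new})$ to the interpolation error of $\epsilon(\cdot,\theta)$ and $C_{\upsilon,\gamma}(x_{new},x_{new})-\Sigma_1^\text{T}\Sigma_\gamma^{-1}\Sigma_1$ to the interpolation error of the kernel section, then apply Lemma \ref{th:bound} (whose $\gamma$-uniform constant rests on Lemma \ref{th:equivalence}, exactly as you note) together with $\|C_{\upsilon,\gamma}(\cdot,x_{new})\|_{\mathcal{N}_{C_{\upsilon,\gamma}}(\Omega)}=1$ and $\tau^2\leq\tau_0^2$ before passing to the suprema. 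This matches the derivations in (\ref{bmu}) and (\ref{bsigma}), of which the theorem is the stated summary.
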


From Theorem \ref{th:deterministic}, the rate of convergence is $O(h^\upsilon_{\mathbf{x},\Omega})$, which is known to be optimal in the current setting \citep{wendland2005scattered}.
It is worth noting that the predictive behavior of the KO calibration is more robust than in the case of estimation as shown by \cite{tuo2014calibration}, Theorem 4.2. Specifically, they show the KO calibration estimator tends to the minimizer of a norm involving the prior assumption, i.e., the KO calibration can reply heavily on the prior specification. By comparison, the predictive performance as shown in Theorem \ref{th:deterministic} above does not depend on the choice of the prior asymptotically.

\subsection{A Nonparametric Regression Perspective}\label{Sec:Nonparametric}

Now we turn to a more realistic case, where the physical observations have random measurement errors. As before, we treat the true process $\zeta(\cdot)$ as a deterministic function. For the ease of mathematical treatment, in this section we fix the value of $\gamma$. Our analysis later will show in Theorems \ref{th:convergencerate} and \ref{th:variance} that the resulting rate of convergence is not influenced by the choice of $\gamma$. Other parameters are either estimated or chosen to vary along with the sample size $n$. %As in Section \ref{Sec:Approximation}, we denote the supports of the prior distributions $\pi(\theta_0),\pi(\tau^2),\pi(\sigma^2),\pi(\gamma)$ by $S_\theta,S_{\tau^2},S_{\sigma^2},S_\gamma$ respectively and suppose $S_\theta$ is a compact subset of $\mathbf{R}$, $S_{\tau^2}\subset [0,\tau^2_0],S_{\sigma^2}\subset (0,\infty),S_\gamma\subset [\gamma_1,\gamma_2]$ with $0<\tau^2<+\infty$ and $0<\gamma_1<\gamma_2<+\infty$.

To study the predictive behavior of the KO method asymptotically, the key is to understand the posterior mode of $\delta(\mathbf{x})$ in (\ref{cheap}). We first introduce the representer theorem \citep{scholkopf2001generalized,wahba1990spline}. We also give a proof of the representer theorem using Lemma \ref{lemma:1} in Appendix \ref{App:proof}.

\begin{lemma}[Representer Theorem]\label{th:representer}
Let $x_1,\ldots,x_n$ be a set of distinct points in $\Omega$ and $L:\mathbf{R}^n\rightarrow \mathbf{R}$ be an arbitrary function. Denote the minimizer of the optimization problem
\begin{eqnarray*}
\min_{f\in\mathcal{N}_\Phi(\Omega)}L(f(x_1),f(x_2),\ldots,f(x_n))+\|f\|^2_{\mathcal{N}_\Phi(\Omega)}
\end{eqnarray*}
by $\hat{f}$. Then $\hat{f}$ possesses the representation
\begin{eqnarray*}
\hat{f}=\sum_{i=1}^n \alpha_i \Phi(x_i,\cdot),
\end{eqnarray*}
with coefficients $\alpha_i\in\mathbf{R}, i=1,\ldots,n$.
\end{lemma}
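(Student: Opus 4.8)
The plan is to reduce the minimization to the interpolant construction of (\ref{interpolant}) and exploit the Pythagorean identity (\ref{pathagorean}). The central observation is that the penalty $\|f\|^2_{\mathcal{N}_\Phi(\Omega)}$ can always be decreased by replacing $f$ with its interpolant over the design points, while the loss $L$, which depends on $f$ only through the pointwise values $f(x_1),\ldots,f(x_n)$, is left untouched by this replacement. The orthogonal component of $f$ (relative to the span of $\{\Phi(x_i,\cdot)\}_{i=1}^n$) thus contributes to the norm but buys nothing in the loss, so the minimizer cannot have such a component.

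Concretely, I would proceed as follows. Given any $f\in\mathcal{N}_\Phi(\Omega)$, form its interpolant $s_{f,\mathbf{x}}$ over $\mathbf{x}=\{x_1,\ldots,x_n\}$ as in (\ref{interpolant}). By construction $s_{f,\mathbf{x}}$ interpolates $f$ at the design points, that is $s_{f,\mathbf{x}}(x_j)=f(x_j)$ for $j=1,\ldots,n$, so that
\begin{eqnarray*}
L(s_{f,\mathbf{x}}(x_1),\ldots,s_{f,\mathbf{x}}(x_n))=L(f(x_1),\ldots,f(x_n)).
\end{eqnarray*}
At the same time, the Pythagorean identity (\ref{pathagorean}), which follows from Lemma \ref{lemma:1}, gives
\begin{eqnarray*}
\|s_{f,\mathbf{x}}\|^2_{\mathcal{N}_\Phi(\Omega)}=\|f\|^2_{\mathcal{N}_\Phi(\Omega)}-\|f-s_{f,\mathbf{x}}\|^2_{\mathcal{N}_\Phi(\Omega)}\leq\|f\|^2_{\mathcal{N}_\Phi(\Omega)},
\end{eqnarray*}
with equality if and only if $f=s_{f,\mathbf{x}}$.

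Combining the two displays, the value of the objective at $s_{f,\mathbf{x}}$ never exceeds its value at $f$. Applying this with $f=\hat{f}$, the assumed minimizer, the objective at $s_{\hat{f},\mathbf{x}}$ is at most that at $\hat{f}$; minimality then forces $\|\hat{f}\|^2_{\mathcal{N}_\Phi(\Omega)}=\|s_{\hat{f},\mathbf{x}}\|^2_{\mathcal{N}_\Phi(\Omega)}$, hence $\|\hat{f}-s_{\hat{f},\mathbf{x}}\|_{\mathcal{N}_\Phi(\Omega)}=0$ and $\hat{f}=s_{\hat{f},\mathbf{x}}$. Since $s_{\hat{f},\mathbf{x}}$ is by definition of the form $\sum_{i=1}^n\alpha_i\Phi(x_i,\cdot)$, this is exactly the claimed representation. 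I expect no serious obstacle here: the only point requiring care is the dual role played by $s_{f,\mathbf{x}}$, namely that it preserves every pointwise value entering $L$ while its native norm can only shrink relative to $f$. The existence of $\hat{f}$ is already presupposed in the statement, so the argument is purely a characterization of its form rather than an existence proof.
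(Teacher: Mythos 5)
Your argument is correct and follows essentially the same route as the paper's own proof: replace $\hat{f}$ by its interpolant $s_{\hat{f},\mathbf{x}}$, observe that the loss term is unchanged while the native norm can only decrease by the Pythagorean identity (\ref{pathagorean}), and conclude from minimality that $\hat{f}=s_{\hat{f},\mathbf{x}}$, which lies in the span of $\{\Phi(x_i,\cdot)\}_{i=1}^n$ by construction. No gaps.
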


Similar to Section \ref{Sec:Approximation}, we first fix the values of $\tau^2,\sigma^2,\gamma$ in their domain. Then we consider the profile posterior density function of $\delta(\mathbf{x})$, which, according to (\ref{cheap}), is proportional to
\begin{eqnarray}
\pi_{\tau^2,\sigma^2,\gamma}(\theta,\delta(\mathbf{x}))= \exp\left\{-\frac{1}{2\sigma^2}\|\mathbf{y}^2-y^s(\mathbf{x},\theta)-\delta(\mathbf{x})\|^2- \frac{\delta(\mathbf{x})^\text{T}\Sigma_\gamma^{-1}\delta(\mathbf{x})}{2\tau^2}\right\}.\label{delta}
\end{eqnarray}

The profile posterior mode $(\hat{\theta}_{KO},\hat{\delta}(\mathbf{x}))$ maximizes $\pi_{\tau^2,\sigma^2,\gamma}(\cdot,\cdot)$. Using the representer theorem, we show an equality between $\hat{\delta}(\mathbf{x})$ and the solution to a penalized least squares problem.

\begin{theorem}\label{th:leastsquares}
Let $(\hat{\theta},\hat{\Delta})$ be the solution to
\begin{eqnarray}
\operatorname*{argmin}_{\substack{\theta\in\Theta\\f\in\mathcal{N}_{C_{\upsilon,\gamma}}(\Omega)}}\sum_{i=1}^n (y_i^p-y^s(x_i,\theta)-f(x_i))^2+\frac{\sigma^2}{\tau^2} \|f\|^2_{\mathcal{N}_{C_{\upsilon,\gamma}}(\Omega)}.\label{epsilon}
\end{eqnarray}
Then $\hat{\theta}=\hat{\theta}_{KO}$ and $(\hat{\Delta}(x_1),\ldots,\hat{\Delta}(x_n))^\text{T}=:\hat{\Delta}(\mathbf{x})=\hat{\delta}(\mathbf{x})$.
\end{theorem}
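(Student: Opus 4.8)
The plan is to reduce the infinite-dimensional optimization (\ref{epsilon}) to the finite-dimensional maximization of the profile posterior (\ref{delta}) by combining the representer theorem with one linear change of variables on $\mathbb{R}^n$. First I would fix $\theta\in\Theta$ and examine the inner minimization over $f\in\mathcal{N}_{C_{\upsilon,\gamma}}(\Omega)$ in (\ref{epsilon}). Setting $L(z_1,\ldots,z_n)=\sum_{i=1}^n(y_i^p-y^s(x_i,\theta)-z_i)^2$, the objective is exactly $L(f(x_1),\ldots,f(x_n))+\frac{\sigma^2}{\tau^2}\|f\|^2_{\mathcal{N}_{C_{\upsilon,\gamma}}(\Omega)}$, which is of the form treated by Lemma \ref{th:representer}; the positive factor $\sigma^2/\tau^2$ in front of the penalty is harmless, since it can be absorbed by replacing $L$ with $(\tau^2/\sigma^2)L$ without changing the minimizer. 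Hence the inner minimizer admits the representation $\hat f_\theta=\sum_{i=1}^n\alpha_i C_{\upsilon,\gamma}(x_i,\cdot)$ for some $\alpha=(\alpha_1,\ldots,\alpha_n)^\text{T}\in\mathbb{R}^n$.

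Next I would pass to vector notation. For $f=\sum_i\alpha_i C_{\upsilon,\gamma}(x_i,\cdot)$, the vector of fitted values is $(f(x_1),\ldots,f(x_n))^\text{T}=\Sigma_\gamma\alpha$, and by the inner-product formula (\ref{innerproduct}) the squared native norm is $\|f\|^2_{\mathcal{N}_{C_{\upsilon,\gamma}}(\Omega)}=\alpha^\text{T}\Sigma_\gamma\alpha$. Substituting both into (\ref{epsilon}) reduces the inner problem to the minimization over $\alpha\in\mathbb{R}^n$ of $\|\mathbf{y}^p-y^s(\mathbf{x},\theta)-\Sigma_\gamma\alpha\|^2+\frac{\sigma^2}{\tau^2}\alpha^\text{T}\Sigma_\gamma\alpha$. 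Because $\Sigma_\gamma$ is symmetric and positive definite, hence invertible, I would introduce the change of variables $\mathbf{v}=\Sigma_\gamma\alpha$, i.e. $\alpha=\Sigma_\gamma^{-1}\mathbf{v}$, under which the first term becomes $\|\mathbf{y}^p-y^s(\mathbf{x},\theta)-\mathbf{v}\|^2$ and the penalty becomes $\frac{\sigma^2}{\tau^2}\mathbf{v}^\text{T}\Sigma_\gamma^{-1}\mathbf{v}$. Multiplying the negative logarithm of (\ref{delta}) by $2\sigma^2$ yields precisely $\|\mathbf{y}^p-y^s(\mathbf{x},\theta)-\delta(\mathbf{x})\|^2+\frac{\sigma^2}{\tau^2}\delta(\mathbf{x})^\text{T}\Sigma_\gamma^{-1}\delta(\mathbf{x})$, so identifying $\mathbf{v}$ with $\delta(\mathbf{x})$ makes the two inner objectives coincide term by term.

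Finally I would assemble the pieces. Since $\mathbf{v}\mapsto\Sigma_\gamma^{-1}\mathbf{v}$ is a bijection of $\mathbb{R}^n$, the representer parametrization by $\alpha$ and the posterior parametrization by $\delta(\mathbf{x})$ range over the same vectors; thus for every fixed $\theta$ the profiled value of (\ref{epsilon}) equals $2\sigma^2$ times the profiled negative log of (\ref{delta}), and their (unique, by strict convexity in $\mathbf{v}$) inner minimizers match, so the fitted values $\hat\Delta(\mathbf{x})=\Sigma_\gamma\hat\alpha$ coincide with the maximizing $\hat\delta(\mathbf{x})$. As the two objectives then agree as functions of $\theta$ after profiling out the discrepancy, they share the same outer minimizer, giving $\hat\theta=\hat\theta_{KO}$ and hence $\hat\Delta(\mathbf{x})=\hat\delta(\mathbf{x})$. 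The one step genuinely requiring justification is that restricting $f$ to the span $\{\sum_i\alpha_i C_{\upsilon,\gamma}(x_i,\cdot)\}$ loses nothing, i.e. no function outside this span achieves a smaller objective; this is exactly the content of Lemma \ref{th:representer} (equivalently the optimality property (\ref{optimality})), after which the remaining argument is routine linear algebra resting only on the symmetry and invertibility of $\Sigma_\gamma$.
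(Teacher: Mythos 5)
Your proposal is correct and follows essentially the same route as the paper's own proof: apply the representer theorem to the inner minimization over $f$, pass to the finite-dimensional parametrization $\hat\Delta(\mathbf{x})=\Sigma_\gamma\alpha$ with $\|f\|^2_{\mathcal{N}_{C_{\upsilon,\gamma}}(\Omega)}=\alpha^\text{T}\Sigma_\gamma\alpha$, and change variables via the invertibility of $\Sigma_\gamma$ to match the objective with $-2\sigma^2$ times the log of (\ref{delta}). You are in fact slightly more careful than the paper on two points it leaves implicit --- absorbing the factor $\sigma^2/\tau^2$ into $L$ so that Lemma \ref{th:representer} applies as stated, and noting that the change of variables is a bijection --- but these are refinements of, not departures from, the same argument.
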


Now we are ready to state the main asymptotic theory. We will first investigate the asymptotic properties of the predictive mean. Next we will consider the consistency of the predictive variance.

From (\ref{pred_mean}), the predictive mean of the KO model is
\begin{eqnarray}
\zeta^{\text{rep}}(x_{new})=y^s(x_{new},\hat{\theta}_{KO})+\Sigma_1^\text{T}(x_{new})\Sigma^{-1}_\gamma\hat{\delta}(\mathbf{x}),\label{zetahat}
\end{eqnarray}
where $\Sigma_1(x_{new})=(C_{\upsilon,\gamma}(x_{new},x_1),\ldots,C_{\upsilon,\gamma}(x_{new},x_n))^\text{T}$.
Invoking Theorem \ref{th:leastsquares}, we have $\hat{\delta}(\mathbf{x})=\hat{\Delta}(\mathbf{x})$ with $\hat{\Delta}$ defined in (\ref{epsilon}).
Using the formula of the kernel interpolant given by (\ref{interpolant}), it can be seen that $\hat{\zeta}(\cdot)-y^s(\cdot,\hat{\theta}_{KO})$ is the kernel interpolant of the data $(\mathbf{x},\hat{\Delta}(\mathbf{x}))$. Hence, from Lemma \ref{th:representer} and Theorem \ref{th:leastsquares} we have
\begin{eqnarray*}
\hat{\zeta}(\cdot)-y^s(\cdot,\hat{\theta}_{KO})=\hat{\Delta}(\cdot).
\end{eqnarray*}

We note that the ratio of the variances $\sigma^2/\tau^2$ plays an important role in (\ref{epsilon}). In the nonparametric regression literature, such a quantity is commonly referred to as the \textit{smoothing parameter}. The smoothing parameter is a tuning parameter to balance the bias and variance of the estimator. It can be seen that as $\sigma^2/\tau^2\rightarrow\infty$, $\hat{\epsilon}$ tends to 0, which has the smallest variance but a large bias; as $\sigma^2/\tau^2\downarrow 0$, $\hat{\epsilon}$ will eventually interpolate $(x_i,y_i^p-y^s(x_i,\hat{\theta}_{KO}))$, which typically leads to an over-fitting problem. We denote $\sigma^2/\tau^2$ by $r_n$ when the sample size is $n$. According to Theorem \ref{th:convergencerate}, the optimal rate for $r_n$ is $r_n\sim n^{\frac{4\upsilon+5d}{4\upsilon+4d}}$. There is a theory by \cite{van2008rates} which says that the optimal tuning rate can be automatically achieved by following a standard Bayesian analysis procedure. To save space, we do not pursue this approach here.

Some asymptotic theory for the penalized least squares problem (\ref{epsilon}) is available in the literature \citep{vandegeer2000empirical}. In order to employ such a theory, we need to choose the smoothing parameter $r_n$ to diverge at an appropriate rate as $n$ goes to infinity.
For convenience, we suppose that the design points are randomly chosen. We consider the rate of convergence of the penalized least squares estimator under the $L_2$ metric. We assume that $y^s$ is Lipschitz continuous. Then the metric entropy of $\{y^s(\cdot,\theta):\theta\in\Theta\}$ is dominated by that of the unit ball of the nonparametric class $\mathcal{N}_{C_{\upsilon,\gamma}}(\Omega)$. See \cite{van1996weak} for details. Theorem \ref{th:convergencerate} then is a direct consequence of Theorem 10.2 of \cite{vandegeer2000empirical}, where the required upper bound for the metric entropy is obtained from (3.6) of \cite{tuo2015efficient}.

\begin{theorem}\label{th:convergencerate}
Suppose the design points $\{x_i\}$ are independent samples from the uniform distribution over $\Omega$. We assume $\upsilon\geq 1$ and $y^s$ is Lipschitz continuous. Choose $r_n$ appropriately so that $r_n\sim n^{\frac{4\upsilon+5d}{4\upsilon+4d}}$.
Under model (\ref{NP}) with $\sigma^2>0$, the KO predictor $\hat{\zeta}$ defined in (\ref{zetahat}) has the approximation properties %rate of convergence in $L_2(\Omega)$
\begin{eqnarray}
\frac{1}{n}\sum_{i=1}^n(\hat{\zeta}(x_i)-\zeta(x_i))^2=O_p(n^{-\frac{2\upsilon+d}{2\upsilon+2d}}),\label{rateempirical}\\
\|\hat{\zeta}-\zeta\|_{L_2(\Omega)}=O_p(n^{-\frac{\upsilon+d/2}{2\upsilon+2d}}),\label{rate1}
\end{eqnarray}
and
\begin{eqnarray}
\|\hat{\zeta}-\zeta\|_{H^{\upsilon+d/2}(\Omega)}=O_p(1).\label{rate2}
\end{eqnarray}
\end{theorem}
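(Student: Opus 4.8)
The plan is to recast $\hat\zeta$ as a penalized least squares estimator and then invoke the empirical process theory for such estimators. By Theorem \ref{th:leastsquares} the posterior mode satisfies $\hat\zeta = y^s(\cdot,\hat\theta_{KO}) + \hat\Delta$, where $(\hat\theta_{KO},\hat\Delta)$ solves (\ref{epsilon}). Writing $g = y^s(\cdot,\theta) + f$, the criterion (\ref{epsilon}) becomes $\sum_i (y_i^p - g(x_i))^2 + r_n\|f\|^2_{\mathcal{N}_{C_{\upsilon,\gamma}}(\Omega)}$ over the class $\mathcal{G} = \{y^s(\cdot,\theta)+f : \theta\in\Theta,\ f\in\mathcal{N}_{C_{\upsilon,\gamma}}(\Omega)\}$, with the penalty acting only on the nonparametric component. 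The estimand is the whole true process $\zeta = y^s(\cdot,\theta_0) + \delta$, which lies in $\mathcal{G}$ with finite penalty because $\delta = \epsilon(\cdot,\theta_0) \in H^{\upsilon+d/2}(\Omega)$. I would emphasize at this point that the analysis never requires $\hat\theta_{KO}\to\theta_0$: the target is $\zeta$ as a function, so the identifiability obstruction for calibration established in \cite{tuo2014calibration} does not enter the prediction problem.

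First I would fix the metric entropy of $\mathcal{G}$. By Lemma \ref{th:equivalence} the native-space norm is equivalent, uniformly in $\gamma\in[\gamma_1,\gamma_2]$, to the Sobolev norm, so the unit penalty ball is comparable to the unit ball of $H^{\upsilon+d/2}(\Omega)$, whose $L_2$-metric entropy obeys $\log N(\delta)\lesssim \delta^{-\alpha}$ with $\alpha = \frac{2d}{2\upsilon+d}$; this is precisely the bound (3.6) of \cite{tuo2015efficient}. Since $\upsilon\geq 1$ and $d\geq 1$ we have $\alpha<2$, which is the regime required by the penalized-least-squares theory. The parametric part $\{y^s(\cdot,\theta):\theta\in\Theta\}$ contributes only a logarithmic entropy, because $\Theta$ is compact and finite-dimensional and $y^s$ is Lipschitz (\cite{van1996weak}); it is therefore absorbed, and $\mathcal{G}$ inherits the entropy exponent $\alpha$.

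With this entropy bound and the sub-Gaussian errors $e_i\sim N(0,\sigma^2)$, I would apply Theorem 10.2 of \cite{vandegeer2000empirical}. Choosing the smoothing parameter $r_n$ (equivalently $\lambda_n=(r_n/n)^{1/2}$) to balance the approximation and stochastic terms at the rate prescribed in the theorem yields simultaneously the empirical-norm bound $\frac1n\sum_i(\hat\zeta(x_i)-\zeta(x_i))^2 = O_p(n^{-(2\upsilon+d)/(2\upsilon+2d)})$, which is (\ref{rateempirical}), and a bound on the penalty, $\|\hat\Delta\|_{\mathcal{N}_{C_{\upsilon,\gamma}}(\Omega)} = O_p(1)$. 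Combining the latter with Lemma \ref{th:equivalence}, the decomposition $\hat\zeta - \zeta = (\hat\Delta-\delta) + (y^s(\cdot,\hat\theta_{KO}) - y^s(\cdot,\theta_0))$, and the standing assumption $\sup_{\theta\in\Theta}\|\epsilon(\cdot,\theta)\|_{H^{\upsilon+d/2}(\Omega)}<\infty$ (which keeps the $y^s$-differences uniformly bounded in $H^{\upsilon+d/2}(\Omega)$), gives the Sobolev bound (\ref{rate2}).

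It remains to pass from the empirical norm to the population $L_2(\Omega)$ norm to obtain (\ref{rate1}), and I expect this to be the main obstacle. By (\ref{rate2}) the difference $\hat\zeta-\zeta$ lies in an $H^{\upsilon+d/2}(\Omega)$-ball of radius $O_p(1)$, and on such a ball the empirical and $L_2(\Omega)$ norms are uniformly equivalent up to an additive term of the same order as the squared rate; this follows from a uniform law of large numbers for the squared norm over a class of entropy exponent $\alpha<2$ (again \cite{vandegeer2000empirical}). Transferring the empirical rate through this equivalence gives $\|\hat\zeta-\zeta\|^2_{L_2(\Omega)} = O_p(n^{-(2\upsilon+d)/(2\upsilon+2d)})$, i.e. (\ref{rate1}). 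The delicate points are verifying van de Geer's hypotheses, in particular calibrating $r_n$ so that $\lambda_n$ sits exactly at the balance point dictated by $\alpha$, and controlling the empirical-to-population deviation uniformly over the random (rather than fixed) design; the compactness and Lipschitz assumptions on the $y^s$-family are what prevent the parametric component from disturbing either estimate.
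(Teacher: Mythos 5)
Your proposal is correct and follows essentially the same route as the paper, which itself gives no detailed argument but simply invokes Theorem 10.2 of \cite{vandegeer2000empirical} together with the entropy bound (3.6) of \cite{tuo2015efficient} and the remark that the Lipschitz parametric family $\{y^s(\cdot,\theta)\}$ contributes negligible entropy. You have merely filled in the same steps more explicitly (the reduction via Theorem \ref{th:leastsquares}, the uniform norm equivalence from Lemma \ref{th:equivalence}, the penalty bound yielding (\ref{rate2}), and the empirical-to-$L_2$ transfer), so no further comparison is needed.
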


Since the native space $\mathcal{N}_{C_{\upsilon,\gamma}(\Omega)}$ is equivalent to the Sobolev space $H^{\upsilon+d/2}(\Omega)$, the rate of convergence in (\ref{rate1}) is optimal according to the theory by \cite{stone1982optimal}. The interpolation inequality of the Sobolev spaces (see (12) in Chapter 5 of \cite{adams2003sobolev}) claims that
\begin{eqnarray*}
\|\hat{\zeta}-\zeta\|_{L_\infty(\Omega)}\leq K\|\hat{\zeta}-\zeta\|_{H^{\upsilon+d/2}(\Omega)}^{\frac{d}{2(\upsilon+d/2)}} \|\hat{\zeta}-\zeta\|_{L_2(\Omega)}^{1-\frac{d}{2(\upsilon+d/2)}},
\end{eqnarray*}
with constant $K$ depending only on $\Omega$ and $\upsilon$. In view of (\ref{rate1}) and (\ref{rate2}), we have
\begin{eqnarray}
\|\hat{\zeta}-\zeta\|_{L_\infty(\Omega)}=O_p(n^{-\frac{\upsilon}{2\upsilon+2d}}),\label{rateuniform}
\end{eqnarray}
which gives the rate of convergence of the predictive mean under the uniform metric.

Now we turn to the consistency of the predictive variance. To avoid ambiguity, we denote the true value of $\sigma^2$ by $\sigma^2_0$. First we consider the profile posterior mode of $\sigma^2$ in (\ref{cheap}). For simplicity, we only consider the non-informative prior for $\sigma^2$ with $\pi(\sigma^2)\propto 1$. However, we note that the limiting value of the posterior mode of $\sigma^2$ is not affected by the choice of $\pi(\sigma^2)$, provided that $\sigma^2_0$ is contained in the support of $\pi(\sigma^2)$. It is easily seen from (\ref{cheap}) that the posterior mode of $\sigma^2$ is
\begin{eqnarray*}
\hat{\sigma}^2&=&\frac{\|\mathbf{y}^p-y^s(\mathbf{x},\hat{\theta}_{KO})-\hat{\delta}(\mathbf{x})\|^2}{n}\\
&=&\frac{1}{n}\sum_{i=1}^n\{e_i+(\hat{\zeta}(x_i)-\zeta(x_i))\}^2\\
&=&\frac{1}{n}\sum_{i=1}^n e_i^2+\frac{2}{n}\sum_{i=1}^n e_i(\hat{\zeta}(x_i)-\zeta(x_i))+ \frac{1}{n}\sum_{i=1}^n(\hat{\zeta}(x_i)-\zeta(x_i))^2,
\end{eqnarray*}
which yields the following asymptotic property
\begin{eqnarray}
|\hat{\sigma}^2-\sigma_0^2|&=&\left|\hat{\sigma}^2-\frac{1}{n}\sum_{i=1}^n e_i^2+O_p(n^{-1/2})\right|\nonumber\\
&=&\left|\frac{2}{n}\sum_{i=1}^n e_i(\hat{\zeta}(x_i)-\zeta(x_i))+ \frac{1}{n}\sum_{i=1}^n(\hat{\zeta}(x_i)-\zeta(x_i))^2+ O_p(n^{-1/2})\right|\nonumber\\
&\leq&2\left(\frac{1}{n}\sum_{i=1}^n e_i^2\right)^{1/2}\left(\frac{1}{n}\sum_{i=1}^n(\hat{\zeta}(x_i)-\zeta(x_i))^2\right)^{1/2} \nonumber\\&&+ \frac{1}{n}\sum_{i=1}^n(\hat{\zeta}(x_i)-\zeta(x_i))^2+ O_p(n^{-1/2})\nonumber\\
&=&O_p(n^{-\frac{\upsilon+d/2}{2\upsilon+2d}}),
\end{eqnarray}
where the inequality follows from Cauchy-Schwarz inequality and the last equality follows from (\ref{rateempirical}).

From (\ref{pred_mean}), the predictive variance of the KO model is
\begin{eqnarray}
\hat{\varsigma}^2(x_{new})=\tau^2(C_\gamma(x_{new},x_{new})-\Sigma_1^\text{T}\Sigma^{-1}_\gamma\Sigma_1)+\hat{\sigma}^2.\label{variancerate}
\end{eqnarray}
As discussed in Section \ref{Sec:Approximation}, $C_\gamma(x_{new},x_{new})-\Sigma_1^\text{T}\Sigma^{-1}_\gamma\Sigma_1$ is the approximation error of the kernel interpolation for the function $C_\gamma(\cdot,x_{new})$. Clearly, the error from the interpolation problem discussed in Section \ref{Sec:Approximation} should be no more than that from the smoothing problem discussed in the current section, because of the presence of the random error in the latter situation. Thus we have
\begin{eqnarray}
&&\sup_{x_{new}\in\Omega}|\tau^2(C_\gamma(x_{new},x_{new})-\Sigma_1^\text{T}\Sigma^{-1}_\gamma\Sigma_1)|\nonumber\\
&=&\sup_{x_{new}\in\Omega}|r_n\hat{\sigma}^2(C_\gamma(x_{new},x_{new})-\Sigma_1^\text{T}\Sigma^{-1}_\gamma\Sigma_1)|\nonumber\\
&=&O_p(n^{-\frac{d}{4\upsilon+4d}}n^{-\frac{\upsilon}{2\upsilon+2d}})=O_p(n^{-\frac{\upsilon+d/2}{2\upsilon+2d}}),\label{variancerate2}
\end{eqnarray}
where the second equality follows from the assumption $r_n\sim n^{\frac{d}{4\upsilon+4d}}$ in Theorem \ref{th:convergencerate}, (\ref{variancerate}) and (\ref{rateuniform}). Combining (\ref{variancerate}) and (\ref{variancerate2}) we obtain Theorem \ref{th:variance}.

\begin{theorem}\label{th:variance}
Under the conditions of Theorem \ref{th:convergencerate}, we have the error bound for the predictive variance under the uniform metric
\begin{eqnarray*}
\|\hat{\varsigma}^2(\cdot)-\sigma^2_0\|_{L_\infty(\Omega)}=O_p(n^{-\frac{\upsilon+d/2}{2\upsilon+2d}}).
\end{eqnarray*}
\end{theorem}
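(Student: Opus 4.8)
The plan is to bound the predictive variance $\hat{\varsigma}^2(x_{new})$ in (\ref{variancerate}) by splitting it into two pieces: the interpolation-type term $\tau^2(C_\gamma(x_{new},x_{new})-\Sigma_1^\text{T}\Sigma^{-1}_\gamma\Sigma_1)$ and the estimated noise variance $\hat{\sigma}^2$. The strategy is to show that each piece converges to its target ($0$ and $\sigma^2_0$ respectively) at the claimed rate $O_p(n^{-(\upsilon+d/2)/(2\upsilon+2d)})$, uniformly over $x_{new}\in\Omega$, and then combine them by the triangle inequality under the $L_\infty$ norm.

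First I would handle the noise-variance term. Using the expression for $\hat{\sigma}^2$ as the residual sum of squares divided by $n$ and decomposing $\mathbf{y}^p-y^s(\mathbf{x},\hat\theta_{KO})-\hat\delta(\mathbf{x})$ into the observation errors $e_i$ plus the prediction error $\hat\zeta(x_i)-\zeta(x_i)$, one writes $\hat\sigma^2$ as a sum of three terms. The first is $n^{-1}\sum e_i^2 = \sigma^2_0 + O_p(n^{-1/2})$ by the law of large numbers and the central limit theorem. The cross term is controlled by Cauchy--Schwarz against the empirical prediction error, whose mean square is $O_p(n^{-(2\upsilon+d)/(2\upsilon+2d)})$ by (\ref{rateempirical}); the pure squared-error term is of the same order. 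This yields $|\hat\sigma^2-\sigma^2_0|=O_p(n^{-(\upsilon+d/2)/(2\upsilon+2d)})$, exactly as displayed just before (\ref{variancerate}). Since $\hat\sigma^2-\sigma^2_0$ does not depend on $x_{new}$, this bound is automatically uniform.

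Next I would handle the interpolation term. The key observation, already noted in the excerpt, is that $C_\gamma(x_{new},x_{new})-\Sigma_1^\text{T}\Sigma^{-1}_\gamma\Sigma_1$ is the squared interpolation error (the power function) for the kernel section $C_\gamma(\cdot,x_{new})$, so it is nonnegative and bounded above using Lemma \ref{th:bound} by $C h^\upsilon_{\mathbf{x},\Omega}$, uniformly in $x_{new}$ since $\|C_\gamma(\cdot,x_{new})\|_{\mathcal{N}_{C_{\upsilon,\gamma}}(\Omega)}=1$. With random uniform design points the fill distance satisfies $h_{\mathbf{x},\Omega}=O_p((\log n/n)^{1/d})$, giving a factor $O_p(n^{-\upsilon/d})$ up to logarithms. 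Writing $\tau^2=r_n^{-1}\sigma^2$ and substituting $\hat\sigma^2$ for $\sigma^2$ together with the prescribed rate $r_n\sim n^{(4\upsilon+5d)/(4\upsilon+4d)}$, one checks that the product $\tau^2\cdot h^\upsilon_{\mathbf{x},\Omega}$ collapses to the stated $O_p(n^{-(\upsilon+d/2)/(2\upsilon+2d)})$, as in (\ref{variancerate2}). Finally, combining the two uniform bounds via $\|\hat\varsigma^2(\cdot)-\sigma^2_0\|_{L_\infty(\Omega)}\le \sup_{x_{new}}|\tau^2(C_\gamma-\Sigma_1^\text{T}\Sigma^{-1}_\gamma\Sigma_1)|+|\hat\sigma^2-\sigma^2_0|$ delivers the theorem.

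The main obstacle I anticipate is reconciling the bookkeeping on the rate of $r_n$: the text quotes $r_n\sim n^{(4\upsilon+5d)/(4\upsilon+4d)}$ in Theorem \ref{th:convergencerate} but invokes $r_n\sim n^{d/(4\upsilon+4d)}$ inside (\ref{variancerate2}), so I would need to track the exact exponents carefully and confirm which scaling makes the interpolation term balance the noise term. The subtler technical point is controlling the fill distance for \emph{random} design points and verifying that the ``sufficiently dense'' hypothesis of Lemma \ref{th:bound} holds with probability tending to one; this is where I would invoke a standard fill-distance concentration bound for uniform samples to convert the deterministic estimate (\ref{bsigma}) into the $O_p$ statement, making sure the logarithmic factors are absorbed into the claimed polynomial rate.
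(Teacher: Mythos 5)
Your proposal is correct and shares the paper's overall skeleton: the same three-term decomposition of $\hat\sigma^2$ with the law of large numbers, Cauchy--Schwarz, and (\ref{rateempirical}) giving $|\hat\sigma^2-\sigma_0^2|=O_p(n^{-\frac{\upsilon+d/2}{2\upsilon+2d}})$, followed by a triangle inequality against the kernel-variance term. Where you genuinely diverge is in bounding $\sup_{x_{new}}\tau^2(C_\gamma(x_{new},x_{new})-\Sigma_1^\text{T}\Sigma_\gamma^{-1}\Sigma_1)$. The paper argues heuristically that ``the error from the interpolation problem should be no more than that from the smoothing problem,'' imports the uniform rate (\ref{rateuniform}) for $\hat\zeta-\zeta$ as a surrogate bound on the power function, and substitutes $\tau^2=r_n\hat\sigma^2$ with $r_n\sim n^{\frac{d}{4\upsilon+4d}}$ --- a substitution that is inconsistent both with the definition $r_n=\sigma^2/\tau^2$ and with the rate $r_n\sim n^{\frac{4\upsilon+5d}{4\upsilon+4d}}$ stated in Theorem \ref{th:convergencerate}, exactly the bookkeeping problem you flag. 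You instead bound the power function deterministically via Lemma \ref{th:bound} (using $\|C_{\upsilon,\gamma}(\cdot,x_{new})\|_{\mathcal{N}_{C_{\upsilon,\gamma}}(\Omega)}=1$, so the bound $Ch^\upsilon_{\mathbf{x},\Omega}$ is automatically uniform in $x_{new}$) and then invoke a fill-distance concentration bound $h_{\mathbf{x},\Omega}=O_p((\log n/n)^{1/d})$ for i.i.d.\ uniform designs. This is the more defensible route: it replaces the paper's unproved comparison between interpolation and smoothing errors with a standard quantitative estimate, it verifies the ``sufficiently dense'' hypothesis with probability tending to one (a point the paper silently skips), and --- since $\tau^2 h^\upsilon_{\mathbf{x},\Omega}$ comes out polynomially smaller than $n^{-\frac{\upsilon+d/2}{2\upsilon+2d}}$ under either reading of the $r_n$ exponent, with enough slack to absorb the logarithm --- it makes the conclusion robust to the paper's internal inconsistency about $r_n$. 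The only price is the need to cite a fill-distance result for random designs that the paper itself never states.
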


Noting that $\sigma^2_0$ is the variance of the random noise, which is present in prediction for a new physical response. In other word, no predictor has a mean square error less than $\sigma^2_0$. Theorems \ref{th:convergencerate} and \ref{th:variance} reveal that the predictive distribution given by the KO method can capture the true uncertainty of the physical data in the asymptotic sense.

%\section{Simulation Studies}

\section{Discussions}\label{Sec:discussions}

In this work, we prove some error bounds for the predictive error given by the Kennedy-O'Hagan method in two cases: 1) the physical observations have no random error and 2) the physical observations are noisy. For the ease of mathematical analysis, we only consider the Mat\'{e}rn correlation family. If a different covariance structure is used, we believe that the consistency for the predictive mean and the predictive variance still holds. However, additional study is required to obtain the appropriate rate of convergence. In our entire analysis, we ignore the estimation for some model parameters like $\gamma$ and $\tau^2$. One may consider the error estimate in a fully Bayesian procedure. But the analysis will then become rather complicated and it is unclear whether a new theory can be developed along the same lines.

Throughout this work, we assume that the smoothness parameter $\upsilon$ is given. From Theorems \ref{th:deterministic} and \ref{th:convergencerate}, a better rate of convergence can be obtained by using a greater $\upsilon$ provided that the target function still lies in $\mathcal{N}_{C_{\upsilon,\gamma}}(\Omega)$. Thus ideally, one should choose $\upsilon$ to be close to, but no more than the true degree of smoothness of the target function. There are different ways of choosing data-dependent $\upsilon$, but the mathematical analysis will become much more involved. We refer to \cite{loh2015estimating} and the references therein for some related discussions.

In Section \ref{Sec:Nonparametric}, we assume that the design points $x_i$'s are random samples over $\Omega$. In practice, one may also wish to choose design points using a systematic (deterministic) scheme. In general, if a sequence of fixed designs is used, the same (optimal) rate of convergence is retained, provided that these designs satisfy certain space-filling conditions. We refer to \cite{utreras1988convergence} for the results and necessary mathematical tools.

Finally, we discuss how the calibration procedure can have an effect on the prediction for the true process. In this article, we allow the number of physical measurements to grow to infinity and obtain the rate of convergence. By comparing the results presented here and the standard ones using radial basis functions or smoothing spline approximation, we find that the rate of convergence is not elevated by doing calibration. But we can use the following heuristics to show that by doing KO calibration the predictive error can be improved by a constant factor. To see this, we review the proof of Theorem \ref{th:deterministic},
from which it can be seen that if we fix $\Phi$ and $\gamma$, the predictive error is bounded by
\begin{eqnarray}
|\hat{\mu}_{\theta,\gamma}-\zeta(x_{new})|\leq C h^\upsilon_{\mathbf{x},\Omega}\|\epsilon(\cdot,\theta)\|_{H^{\upsilon+d/2}(\Omega)},\label{boundderterminisitic}
\end{eqnarray}
for an arbitrarily chosen $\theta\in\Theta$. So the rate of convergence is given by $O(h^\upsilon_{\mathbf{x},\Omega})$, and $\|\epsilon(\cdot,\theta)\|_{H^{\upsilon+d/2}(\Omega)}$ acts as a constant factor. \cite{tuo2014calibration} show that under certain conditions, the KO estimator for the calibration parameter converges to
\begin{eqnarray*}
\theta'=\operatorname*{argmin}_{\theta\in\Theta}\|\epsilon(\cdot,\theta)\|_{\mathcal{N}_\Phi(\Omega)},
\end{eqnarray*}
as the design points become dense over $\Omega$.
Since $\|\cdot\|_{\mathcal{N}_\Phi(\Omega)}$ is equivalent to $\|\cdot\|_{H^{\upsilon+d/2}(\Omega)}$, estimating the calibration parameter via the KO method is apparently beneficial for prediction in the sense that the upper error bound is reduced because $\|\epsilon(\cdot,\theta')\|_{\mathcal{N}_\Phi(\Omega)}\leq \|\epsilon(\cdot,\theta)\|_{\mathcal{N}_\Phi(\Omega)}$ for all $\theta\in\Theta$. There is a similar phenomenon for the stochastic case, by using the arguments in the proof of Theorem 10.2 of \cite{vandegeer2000empirical}.

\appendix

{\bf\Large\center{Appendix}}

\section{Technical Proofs}\label{App:proof}

In this section, we present the proofs for Lemma \ref{lemma:1}, Lemma \ref{th:equivalence}, Lemma \ref{th:representer} and Theorem \ref{th:leastsquares}.

\begin{proof}[Proof of Lemma \ref{lemma:1}]
We first assume $f\in F_\Phi$. If $f=s_{f,\mathbf{x}}$, there is nothing to prove. If $f\neq s_{f,\mathbf{x}}$, without loss of generality, we write
\begin{eqnarray*}
f(x)=\sum_{i=1}^{n+m} \alpha_i \Phi(x,x_i),
\end{eqnarray*}
for an extra set of distinct points $\{x_{n+1},\ldots,x_{n+m}\}\subset \Omega$. Now partition $(A_{i,j})=\Phi(x_i,x_j), 1\leq i,j\leq n+m$ into
\begin{eqnarray*}
A=\begin{pmatrix}
(A_1)_{n\times n} & (A_2)_{n\times m}\\
(A_3)_{m\times n} & (A_4)_{m\times m}
\end{pmatrix},
\end{eqnarray*}
where $A_3=A_2^\text{T}$ because $\Phi$ is symmetric.

Let $\mathbf{y}=(f(x_1),\ldots,f(x_n))^\text{T},a_1=(\alpha_1,\ldots,\alpha_n)^\text{T},a_2=(\alpha_{n+1},\ldots,\alpha_{n+m})^\text{T}$. Clearly, $\mathbf{y}=A_1 a_1+A_2 a_2$. By the definition of $s_{f,\mathbf{x}}$, we have
\begin{eqnarray*}
s_{f,\mathbf{x}}(x)=\sum_{i=1}^n u_i\Phi(x,x_i),
\end{eqnarray*}
with $u=(u_1,\ldots,u_n)^\text{T}$ satisfying $\mathbf{y}=A_1 u$. Then from (\ref{innerproduct}) we obtain
\begin{eqnarray}
&&\langle s_{f,\mathbf{x}},f-s_{f,\mathbf{x}}\rangle_{\mathcal{N}_\Phi(\Omega)}\nonumber\\
&=&\left\langle \sum_{i=1}^n u_i\Phi(x,x_i),\sum_{i=1}^n (\alpha_i-u_i)\Phi(x,x_i)+\sum_{i=n+1}^{n+m}\alpha_i\Phi(x,x_i)\right\rangle_{\mathcal{N}_\Phi(\Omega)}\nonumber\\
&=&\begin{pmatrix}
u^\text{T} & 0
\end{pmatrix}
\begin{pmatrix}
A_1 & A_2\\
A_3 & A_4
\end{pmatrix}
\begin{pmatrix}
a_1-u\\
a_2
\end{pmatrix}\nonumber\\
&=& u^\text{T}(A_1 a_1+A_2 a_2-A_1 u)\nonumber\\
&=& u^\text{T}(\mathbf{y}-\mathbf{y})=0.\label{orthogonality}
\end{eqnarray}

For a general $f\in\mathcal{N}_\Phi(\Omega)$, we can find a sequence $f_n\in F_\Phi$ with $f_n\rightarrow f$ in $\mathcal{N}_\Phi(\Omega)$ as $n\rightarrow\infty$. The desired result then follows from a limiting form of (\ref{orthogonality}).
\end{proof}

\begin{proof}[Proof of Lemma \ref{th:equivalence}]
For any $g\in L_2(\mathbf{R}^d)\cap C(\mathbf{R}^d)$, its native norm admits the representation
\begin{eqnarray}
\|g\|^2_{\mathcal{N}_\Phi(\mathbf{R}^d)}=(2\pi)^{-d/2}\int_{\mathbf{R}^d}\frac{|\tilde{g}(\omega)|^2}{\tilde{\Phi}(\omega)}d \omega,\label{nativerepresentation}
\end{eqnarray}
where $\tilde{g}$ and $\tilde{\Phi}$ denote the Fourier transforms of $g$ and $\Phi$ respectively. See Theorem 10.12 of \cite{wendland2005scattered}. The (fractional) Sobolev norms have a similar representation
\begin{eqnarray}
\|g\|^2_{H^s(\mathbf{R}^d)}=(2\pi)^{-d/2}\int_{\mathbf{R}^d} |\tilde{g}(\omega)|^2 (1+\|\omega\|^2)^s d \omega.\label{sobolevrepresentation}
\end{eqnarray}
See \cite{adams2003sobolev} for details.
\cite{tuo2014calibration} show that
\begin{eqnarray*}
\tilde{C}_{\upsilon,\gamma}(\omega)=2^{d/2}(4\upsilon\gamma^2)^\upsilon\frac{\Gamma(\upsilon+d/2)}{\Gamma(\nu)} (4\upsilon\gamma^2+\|\omega\|^2)^{-(\upsilon+d/2)}.
\end{eqnarray*}
Using the inequality
\begin{eqnarray*}
(1+b)\min(1,a)\leq a+b\leq (1+b)\max(1,a),
\end{eqnarray*}
for $a,b\geq 0$, we obtain
\begin{eqnarray}
\tilde{C}_{\upsilon,\gamma}(\omega)&\leq& 2^{d/2}(4\upsilon\gamma^2)^\upsilon\frac{\Gamma(\upsilon+d/2)}{\Gamma(\upsilon)} \max\left\{1,(4\upsilon\gamma^2)^{-(\upsilon+d/2)}\right\}(1+\|\omega\|^2)^{-(\upsilon+d/2)}\nonumber\\
&\leq& 2^{d/2}\frac{\Gamma(\upsilon+d/2)}{\Gamma(\upsilon)} \max\left\{(4\upsilon\gamma^2_2)^\upsilon,(4\upsilon\gamma^2_1)^{-d/2}\right\} (1+\|\omega\|^2)^{-(\upsilon+d/2)}\nonumber\\
&=:& C_1 (1+\|\omega\|^2)^{-(\upsilon+d/2)},\label{C1}
\end{eqnarray}
and
\begin{eqnarray}
\tilde{C}_{\upsilon,\gamma}(\omega)&\geq& 2^{d/2}(4\upsilon\gamma^2)^\upsilon\frac{\Gamma(\upsilon+d/2)}{\Gamma(\upsilon)} \min\left\{1,(4\upsilon\gamma^2)^{-(\upsilon+d/2)}\right\}(1+\|\omega\|^2)^{-(\upsilon+d/2)}\nonumber\\
&\geq&2^{d/2}\frac{\Gamma(\upsilon+d/2)}{\Gamma(\upsilon)} \min\left\{(4\upsilon\gamma^2_1)^\upsilon,(4\upsilon\gamma^2_2)^{-d/2}\right\} (1+\|\omega\|^2)^{-(\upsilon+d/2)}\nonumber\\
&=:& C_2 (1+\|\omega\|^2)^{-(\upsilon+d/2)},\label{C2}
\end{eqnarray}
hold for all $\omega\in\mathbf{R}^d$.

Now we apply the extension theorem of the native spaces (Theorem 10.46 of \citealp{wendland2005scattered}) to obtain a function $f^E\in\mathcal{N}_{C_{\upsilon,\gamma}}(\mathbf{R}^d)$ such that $f^E|_\Omega=f$ and $\|f\|_{\mathcal{N}_{C_{\upsilon,\gamma}}(\Omega)}=\|f^E\|_{\mathcal{N}_{C_{\upsilon,\gamma}}(\mathbf{R}^d)}$ for each $\gamma\in[\gamma_1,\gamma_2]$. We use (\ref{nativerepresentation})-(\ref{C1}) to obtain
\begin{eqnarray}
\|f\|^2_{\mathcal{N}_{C_{\upsilon,\gamma}}(\Omega)}&=&\|f^E\|^2_{\mathcal{N}_{C_{\upsilon,\gamma}}(\mathbf{R}^d)}
=(2\pi)^{-d/2}\int_{\mathbf{R}^d}\frac{|\tilde{f}^E(\omega)|^2}{\tilde{C}_{\upsilon,\gamma}(\omega)} d \omega\nonumber\\
&\geq&C_1^{-1}(2\pi)^{-d/2}\int_{\mathbf{R}^d}|\tilde{f}^E(\omega)|^2(1+\|\omega\|^2)^{\upsilon+d/2} d \omega\nonumber\\
&=&C_1^{-1}\|f^E\|^2_{H^{\upsilon+d/2}(\mathbf{R}^d)}\geq C_1^{-1}\|f\|^2_{H^{\upsilon+d/2}(\Omega)},\label{normineq1}
\end{eqnarray}
where the last inequality follows from the fact that $f^E|_\Omega=f$. On the other hand, because $\Omega$ is convex, $f$ has an extension $f_E\in H^{\upsilon+d/2}(\mathbf{R}^d)$ satisfying $\|f_E\|_{H^k(\mathbf{R}^d)}\leq c\|f\|_{H^k(\Omega)}$ for some constant $c$ independent of $f$. Then we use (\ref{nativerepresentation}), (\ref{sobolevrepresentation}) and (\ref{C2}) to obtain
\begin{eqnarray*}
\|f_E\|^2_{H^k(\Omega)}&\geq& c^{-2}\|f\|^2_{H^k(\Omega)}\nonumber\\
&=& c^{-2}(2\pi)^{-d/2} \int_{\mathbf{R}^d}|\tilde{f}^E(\omega)|^2(1+\|\omega\|^2)^{\upsilon+d/2} d \omega\nonumber\\
&\geq& c^{-2} C_2(2\pi)^{-d/2}\int_{\mathbf{R}^d}\frac{|\tilde{f}^E(\omega)|^2}{\tilde{C}_{\upsilon,\gamma}(\omega)} d \omega\nonumber\\
&=&c^{-2}C_2\|f_E\|^2_{\mathcal{N}_{C_{\upsilon,\gamma}}(\mathbf{R}^d)}\geq c^{-2}C_2\|f\|^2_{\mathcal{N}_{C_{\upsilon,\gamma}}(\Omega)},\label{normineq2}
\end{eqnarray*}
where the last inequality follows from the restriction theorem of the native space, which states that the restriction $f=f_E|_\Omega$ is contained in $\mathcal{N}_{C_{\upsilon,\gamma}}(\Omega)$ with a norm that is less than or equal to the norm $\|f_E\|_{\mathcal{N}_{C_{\upsilon,\gamma}}(\mathbf{R}^d)}$. See Theorem 10.47 of \cite{wendland2005scattered}. The desired result is proved by combining (\ref{normineq1}) and (\ref{normineq2}).
\end{proof}

\begin{proof}[Proof of Lemma \ref{th:representer}]
For $f\in\mathcal{N}_\Phi(\Omega)$, define
\begin{eqnarray*}
M(f)=L(f(x_1),\ldots,f(x_n))+\|f\|^2_{\mathcal{N}_\Phi(\Omega)}.
\end{eqnarray*}
Now consider $s_{\hat{f},X}$, i.e., the interpolant of $\hat{f}$ over $X=\{x_1,\ldots,x_n\}$ using the kernel function $\Phi$. Because $\hat{f}(x_i)=s_{\hat{f},X}(x_i)$ for $i=1,\ldots,n$, we have
\begin{eqnarray}
L(f(x_1),\ldots,f(x_n))=L(s_{\hat{f},X}(x_1),\ldots,s_{\hat{f},X}(x_n)). \label{representer.e1}
\end{eqnarray}
In addition, it is easily seen from Lemma \ref{lemma:1}, (\ref{pathagorean}) and (\ref{optimality}) that
\begin{eqnarray}
\|s_{\hat{f},X}\|^2_{\mathcal{N}_\Phi(\Omega)}\leq \|\hat{f}\|^2_{\mathcal{N}_\Phi(\Omega)},\label{representer.e2}
\end{eqnarray}
and the equality holds if and only if $s_{\hat{f},X}=\hat{f}$. By combining (\ref{representer.e1}) and (\ref{representer.e2}) we obtain
\begin{eqnarray}
M(s_{\hat{f},X})\leq M(\hat{f}).\label{representer.e3}
\end{eqnarray}
Because $\hat{f}$ minimizes $M(f)$, the reverse of (\ref{representer.e3}) also holds. Hence we deduce $s_{\hat{f},X}=\hat{f}$, which proves the theorem according to the definition of the interpolant.
\end{proof}

\begin{proof}[Proof of Theorem \ref{th:leastsquares}]
We first rewrite the minimization problem (\ref{epsilon}) as the following iterated form
\begin{eqnarray}\label{iterated}
&&\min_{\substack{\theta\in\Theta\\f\in\mathcal{N}_{C_{\upsilon,\gamma}}(\Omega)}}\sum_{i=1}^n (y_i^p-y^s(x_i,\theta)-f(x_i))^2+\frac{\sigma^2}{\tau^2} \|f\|^2_{\mathcal{N}_{C_{\upsilon,\gamma}}(\Omega)}\nonumber\\
&=&\min_{\theta\in\Theta}\min_{f\in\mathcal{N}_{C_{\upsilon,\gamma}}(\Omega)} \sum_{i=1}^n (y_i^p-y^s(x_i,\theta)-f(x_i))^2+\frac{\sigma^2}{\tau^2} \|f\|^2_{\mathcal{N}_{C_{\upsilon,\gamma}}(\Omega)}
\end{eqnarray}
%Lemma \ref{th:representer} asserts that $\hat{\epsilon}$ admits the representation
Now we apply Lemma \ref{th:representer} to the inner minimization problem in (\ref{iterated}) and obtain the following representation for $\hat{\Delta}$:
\begin{eqnarray*}
\hat{\Delta}=\sum_{i=1}^n \alpha_i C_{\upsilon,\gamma}(x_i,\cdot),
\end{eqnarray*}
with an undetermined vector of coefficients $\alpha=(\alpha_1,\ldots,\alpha_n)^\text{T}$. Using the definition $\Sigma_{\gamma}=(C_{\upsilon,\gamma}(x_i,x_j))_{i j}$, clearly we have the matrix representation
\begin{eqnarray}
\hat{\Delta}(\mathbf{x})=\Sigma_\gamma \alpha .\label{changeofvariable}
\end{eqnarray}
Now using (\ref{innerproduct}) we have
\begin{eqnarray*}
\|\hat{\Delta}\|_{\mathcal{N}_{C_{\upsilon,\gamma}}(\Omega)}^2=\left\langle\sum_{i=1}^n \alpha_i C_{\upsilon,\gamma}(x_i,\cdot),\sum_{i=1}^n \alpha_i C_{\upsilon,\gamma}(x_i,\cdot)\right\rangle_{\mathcal{N}_{C_{\upsilon,\gamma}}(\Omega)}=\alpha^\text{T}\Sigma_{\gamma}\alpha.
\end{eqnarray*}
The minimization problem (\ref{epsilon}) then reduces to
\begin{eqnarray*}
\operatorname*{argmin}_{\substack{\theta\in\Theta\\ \alpha\in\mathbf{R}^n}} \|\mathbf{y}^p-y^s(\mathbf{x},\theta)-\alpha\Sigma_{\gamma}\|^2+\frac{\sigma^2}{\tau^2} \alpha^\text{T}\Sigma_{\gamma}\alpha.
\end{eqnarray*}
Applying a change-of-variable argument using (\ref{changeofvariable}) we obtain the following optimization formula
\begin{eqnarray*}
\operatorname*{argmin}_{\substack{\theta\in\Theta\\ \Delta(\mathbf{x})\in\mathbf{R}^n}} \|\mathbf{y}^p-y^s(\mathbf{x},\theta)-\Delta(\mathbf{x})\|^2+\frac{\sigma^2}{\tau^2} \Delta(\mathbf{x})^\text{T}\Sigma_{\gamma}^{-1}\Delta(\mathbf{x}).
\end{eqnarray*}
Elementary calculations show its equivalence to the definition of $(\hat{\theta}_{KO},\hat{\delta}(\mathbf{x}))$.
\end{proof}

\section*{Acknowledgements}
Tuo's work is supported by the National Center for Mathematics and Interdisciplinary Sciences in CAS
and NSFC grant 11501551, 11271355 and 11671386.
Wu's work is supported by NSF grant DMS 1564438.
The authors are grateful to the Associate Editor and referees for very helpful comments.

\bibliographystyle{chicago}
\bibliography{calibration}

\end{document}